\DeclarePairedDelimiter\floor{\lfloor}{\rfloor}
\theoremstyle{thmstyleone}%
\newtheorem{theorem}{Theorem}%
\newtheorem{proposition}[theorem]{Proposition}%
\newtheorem{lemma}[theorem]{Lemma}%
\theoremstyle{thmstyletwo}%
\theoremstyle{thmstylethree}%
\newtheorem{definition}{Definition}%
\newcommand{\M}{\mathcal{M}}
\begin{document}

\title[Article Title]{A discrete-time temporal deontic STIT logic based on interpreted systems}

\author[1]{\fnm{Shuge} \sur{Rong}}
\author*[2]{\fnm{Yifeng} \sur{Ding}}\email{yf.ding@pku.edu.cn}

\affil[1]{\orgdiv{Department of Philosophy}, \orgname{Carnegie Mellon University}}
\affil[2]{\orgdiv{Department of Philosophy and Religious Studies}, \orgname{Peking University}}


\abstract{We present a STIT (`see to it that') logic with discrete temporal operators and deontic operators in which we can formalize and reason about legal concepts such as persistent duty and the dynamic concept of power from Hohfeld. As our main technical contribution, we show that this logic is sound and complete with respect to the semantics based on interpreted systems and is decidable.}

\keywords{STIT logic, temporal STIT logic, interpreted system, axiomatization, legal reasoning}

\maketitle

\section{Introduction}
In a world full of agents, we naturally assign certain propositions as the consequences of certain agents' actions and regard them as responsible. Among many colloquial ways to express such assignments, propositions of the form `$a$ sees to it that $\varphi$' have been argued to be the canonical form of such agential propositions \cite[Chapter 1]{belnap2001facing}, and theories and logics revolving it, often called `STIT logics', have been systematically developed and are highly influential \cite{belnap2001facing,hortyAgencyDeonticLogic2001a,schwarzentruber2012complexity,balbiani2008alternative,lyonAutomatingAgentialReasoning2019,broersenDeonticEpistemicStit2011,broersenMakingStartStit2011}. Our goal in this paper is to study a STIT logic including deontic operators and discrete-time temporal operators and show that it is sound and complete with respect to its natural semantics based on (synchronous) interpreted systems \cite{lehmannKnowledgeCommonKnowledge1984,halpern2004complete}. Our method is by filtration, and thus the decidability of this logic also follows.

Our interest in this logic is partly motivated by its potential to help us understand and extend the seminal analysis of legal relations by Hohfeld \cite{hohfeld1917fundamental}. Lamenting on the imprecise uses of the fundamental concepts such as `right' and `power' and the consequent failure to correctly understand more complex concepts based on these fundamental concepts, Hohfeld put forward a theory of eight fundamental legal relations that could serve to analyze precisely the difficult cases he mentioned and in principle the whole legal enterprise, which would allow for the discovery of deep unity and harmony there. The eight relations are divided into two groups and are shown in Figure \ref{fig:Hohfeldian-concepts}. The left group may be call the \emph{static} relations as they directly specify what one are legally permitted or required to do, while the right group may be called the \emph{dynamic} relations as they talk about people's power or lack of power to change legal relations in the left group. 
\begin{figure}
    \centering
    \begin{tikzcd}[column sep = 7em, row sep = large]
        \textrm{Claim} \arrow[r, leftrightarrow, "\textit{Correlatives}" description] \arrow[d, leftrightarrow, "\textit{Opposites}" description] & \textrm{Duty} \arrow[d, leftrightarrow, "\textit{Opposites}" description] \\
        \textrm{No-Claim} \arrow[r, leftrightarrow, "\textit{Correlatives}" description] & \textrm{Previlege}
    \end{tikzcd}\hspace{1em}
    \begin{tikzcd}[column sep = 7em, row sep = large]
        \textrm{Power} \arrow[r, leftrightarrow, "\textit{Correlatives}" description] \arrow[d, leftrightarrow, "\textit{Opposites}" description] & \textrm{Liability} \arrow[d, leftrightarrow, "\textit{Opposites}" description] \\
        \textrm{No-Power} \arrow[r, leftrightarrow, "\textit{Correlatives}" description] & \textrm{Immunity}
    \end{tikzcd}
    \caption{Hohfeld's fundamental legal relations}
    \label{fig:Hohfeldian-concepts}
\end{figure}
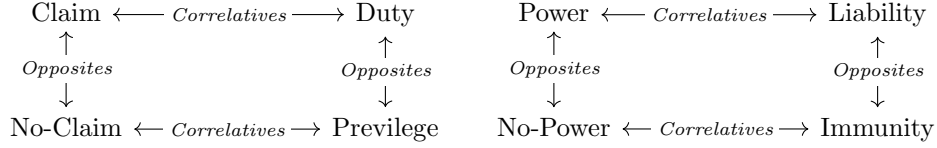

In the language of deontic STIT logic, the static relations can be formalized and rigorously studied \cite{lindahl1977position}. For example, that $a$ and $b$ are related by a `claim' relation such as `$a$ has a claim against $b$ that $b$ shall issue a public apology' can be formalized simply as $\mathsf{O}_b[b]p$ where $\mathsf{O}_b$ is the deontic `ought' operator for $b$, $[b]$ is the STIT operator for $b$, and $p$ designates the proposition that $b$ issues a public apology. That $b$ has a duty to $a$ to issue a public apology is formalized by exactly the same formula, and this is why the claim and the duty relations are correlatives: a claim that $\varphi$ from $a$ to $b$ \emph{just is} a duty that $\varphi$ from $b$ to $a$. In modern terminology in modal logic, no-claim is the negation of claim while privilege is the \emph{dual} of duty. For example, that $a$ has the privilege (against $b$) to publish $a$'s most recent paper is formalized as $\lnot \mathsf{O}_a [a] \lnot p$, which just is that $b$ does not have the claim that $a$ shall see to it that $a$'s most recent paper is not published. One might contend that $a$'s privilege to publish $a$'s most recent paper should instead say that $\lnot \mathsf{O}_a \lnot [a] p$, that is, $a$ `may' see to it that $a$'s most recent paper is published, where `may' (for agent $a$), which we will write as $\mathsf{M}_a$, is the dual of $\mathsf{O}_a$. The distinction will vanish if we assume that in `$a$ has privilege to $\varphi$', $\varphi$ must be agentive for $a$, meaning that $\varphi$ is already of the form $[a] \psi$ so that $[a] \lnot\varphi$ and $\lnot \varphi$ are logically equivalent under the usual $\mathsf{S5}$ logic for (Chellas) STIT. In any case, while Hohfeld's original system may still be lacking as seen from the remaining ambiguities under the lens of deontic STIT logic and the strange asymmetry between the opposition between claim and no-claim (which is negation) and the opposition between duty and privilege (which is duality), what is important is that deontic STIT logic can see through these problems and provide a rich theory of normative positions extending Hohfeld's four static relations. A clear introduction to the theory of normative position is \cite{sergotNormativePositions2013}.

One may wonder why the deontic operator $\mathsf{O}$ is subscripted by the agent name, since standard deontic logics do not have this feature, and so do many theories of normative positions. If we are fully faithful to Hohfeld, deontic operator that are doubly subscripted such as $\mathsf{O}_{ba}$ are more appropriate, where the second subscript $a$ denotes the source or the beneficiary of this obligation, as in if the obligation is not observed by $b$, then $a$ is wronged because of this, or $a$ should be compensated because of this. We chose not to use the doubly subscripted $\mathsf{O}_{ba}$ because if no special axioms are added regarding how to aggregate the $\mathsf{O}_{ba}$'s for different $a$'s into $\mathsf{O}_b$ that stands for the `all things considered' obligation for $b$, then such a generalization is technically trivial; but finding reasonable aggregation axioms is out the scope of this paper, and we will leave it for another occasion. On the other hand, we chose to include a deontic operator for each agent since we find this conceptually more faithful than using only a single deontic operator $\mathsf{O}$. The legal duties are always directed to agents, often irrespective of what other agents would do at the same time. Admittedly, a non-agent-specific $\mathsf{O}$ can provide extra expressivity since in our later formal developments, each $\mathsf{O}_a$ only talks about what actions $a$ must take, while $\mathsf{O}$ could specify what kind of coordinated acts the agents must simultaneously take; e.g., $\mathsf{O}$ may specify that the agents must play a Nash equilibrium in a coordination game. But it seems that the law should not use such extra expressivity. Mathematically, having multiple deontic operators shows that our proof method can handle more than one such operators, and using only $\mathsf{O}$ is mathematically only simpler. 

The four dynamic legal relations pose a more serious challenge to formalization, and there have been many discussions and proposals, including some very recent ones. The central idea of the dynamic concepts is the ability or inability to change legal relations. For example, when $a$ owns a smartphone $x$, $a$ not only has a claim against $b$ that $b$ shall not sell $x$ for money, $a$ also has the \emph{power} to extinguish this claim by gifting $x$ to $b$ or by simply abandoning $x$. To fully account for the dynamic nature of power, \cite{markovichUnderstandingHohfeldFormalizing2020} uses the public announcement operator, \cite{dongDynamicLogicLegal2021} uses the full event model dynamic operators, and \cite{vaneijckModelingDynamicsLegal2024} uses propositional dynamic logic with atomic actions that have normative effects. In our deontic STIT language extended with temporal operators, one way of approximating powers of $a$ is by $\Diamond[a]\mathsf{X}\varphi$ where $\Diamond$ is the possibility operator and $\mathsf{X}$ is the `next time' operator. Here $\mathsf{X}$ provides the dynamics while $\Diamond$ in effect quantifies over what $a$ can do. For example, when $a$ has an offer from $b$ to purchase a cellphone $x$ at price \$$y$, $a$ thereby has a power that can be expressed by $\Diamond [a] \mathsf{X} (\mathsf{O}_a [a] p_1 \land \mathsf{O}_b [b] p_2)$ where $p_1$ says that $a$ transfers \$$y$ to $b$ and $p_2$ says that $b$ transfers $x$ to $a$. There is indeed a change of obligations, since before $a$ accepts the offer, clearly there are no obligations for $a$ to transfer $b$ money or for $b$ to give $x$ to $a$. The $\Diamond$ operator is necessary here since having power is clearly distinct from executing that power. In \cite{belnap2001facing}, the version $\Diamond[a]$ without the $\mathsf{X}$ operator is proposed. In \cite{dongDynamicLogicLegal2021}, the existential quantification over actions is approximated by an explicit finitary disjunction over atomic actions in a given event model, which means that we can only say that `$a$ has such and such power in this event model', but never just `$a$ has such and such power'. \cite{markovichUnderstandingHohfeldFormalizing2020} and \cite{vaneijckModelingDynamicsLegal2024} do not allow quantification over actions in their formal language. 

Two disadvantages of $\Diamond [a] \mathsf{X} \varphi$ as a formalization of $a$'s power to $\varphi$ must immediately be acknowledged. First, the choice of $\mathsf{X}$ may seem arbitrary and even superfluous. Indeed, $\Diamond [a] \mathsf{X}\mathsf{X} \varphi$ may very well also describe a power of $a$, so there is indeed some arbitrariness. However, $\Diamond [a] \varphi$, in our semantics later, is fundamentally different, since we will be committed to the view that no matter what any agent does at this moment, what each agent ought to do at this moment is a fixed matter. In other words, no simultaneous game theory. If for example $\mathsf{O}_b p$, then $\Box \mathsf{O}_b p$ and thus $\lnot \Diamond [a] \lnot \mathsf{O}_b p$. As such, there is nothing $a$ can do to change the static legal relations between $a$ and $b$ at this moment, but it may well be that $\mathsf{X} \lnot \mathsf{O}_b p$ is true and this is brought about by a possible action of $a$. 

Second, one might contend that this formalization by $\Diamond [a] \mathsf{X}$ misses the target, because $\Diamond$ in STIT logic means physical possibility, and as pointed out by Hohfeld himself, the legal power of bringing about legal consequences when doing certain actions must be distinguished from the physical power to do those actions. Indeed, there is a prominent line of research focusing intensely on the logic of attaching normative content to actions without much regard for whether the actions can be carried out, and existential quantification over actions is not important. To approximate this sense of legal power, one idea is to understand $\Box$ in STIT more broadly and consider formulas of the form of $\Box\mathsf{G}'([a]\varphi \to \mathsf{X}\psi)$ where $\mathsf{G}'$ here means `true now and in all the future times'. When $\psi$ is a normative position, this means that $a$ has the power to bring about the position $\psi$ in the next moment by performing $\varphi$. Admittedly, this is still not as fine-grained as many of the previous analyses, but hopefully we have captured the most essential aspects of power that are of practical importance.

The interest of a temporal deontic STIT logic, of course, does not solely lie in the formalization of the Hohfeldian theory of legal relations. Any normative requirement with a deadline needs temporal operators to properly formalize. When the deadline is some fixed time in the future, some iteration of $\mathsf{X}$ can do the job. But one may also promise to do certain things in the future, without specifying a specific date, and in this case $\mathsf{F}\mathsf{O}_a [a] \varphi$ shall be used. Obligations involving the until operator $\mathsf{U}$ are also common; for example, when one borrows money without fixing the date of repaying the principal but promises to repay it in the future, the obligation is to pay the interest every year until you repay the principle, and $\mathsf{U}(q, \mathsf{O}_a [a] p)$ can be used to formalize this idea. \cite{governatoriNormComplianceBusiness2010} has more examples but deals with such obligations very differently. Other works on temporal obligations include \cite{broersenDesigningDeonticLogic2004,dignumMeetingDeadlineWhy2005,balbianiDecisionProceduresDeontic2009,demolombeObligationsDeadlinesFormalization2014}.

The temporal dimension has always been an important component of STIT theory and its underlying branching time worldview \cite{belnap2001facing}. More recently, Lorini \cite{lorini2013temporal} axiomatized a temporal STIT logic based on temporal STIT Kripke models and showed how obligations with temporal dimensions can be formalized in this logic using propositional variables that stand for `violations', and \cite{van2019neutral} extended this logic with explicit deontic operators for each agent. Ciuni and Lorini \cite{ciuni2018comparing} showed that the semantics based on these Kripke models are equivalent to the semantics based on the original branching-tree models, but with a distinguished bundle of histories. \cite{ciuni2018comparing} also studied discrete-time models and showed that the semantics based on Fagin et al.'s interpreted systems \cite{faginReasoningKnowledge2004,halpern2004complete}, which will be our semantics of choice, is equivalent to a number of other semantics, including the semantics based on discrete-time branching-tree models with distinguished bundles. The question of axiomatizing the temporal STIT logic under the semantics based on interpreted systems was also raised in \cite{ciuni2018comparing}. We solve this question with deontic operators added to the language.

Our method is based on filtrating the canonical model, as is common in the completeness proofs of propositional dynamic logics or any modal logic involving a fixed-point operator (the `until' operator in our case). Since our models are subject to strong structural conditions from STIT theory, we adopt the filtration method from \cite{gabbay2003many}. The proof also involves transforming the so-called super-additive models to additive models (a similar procedure is required to deal with distributed knowledge), and we draw inspiration from \cite{schwarzentruber2012complexity}. Since our method is based on filtration, the decidability of the logic follows. It should be mentioned that the 2-EXPTIME completeness of the satisfiability problem of the temporal STIT logic, without the grand coalition STIT operator and based on full discrete-time branching time models, has been obtained in \cite{boudouConcurrentGameStructures2018} using automata.

The rest of the article is organized as follows. In section \ref{sec:set-up}, we set up the formal language, introduce the semantics based on interpreted systems, and present the axiomatic system that is sound and complete with respect to this semantics. Then, we move on to prove the completeness, the idea being to filtrate the canonical model and then transform the result into an interpreted system. We first cover the transformations in Section \ref{sec:general-semantics}, including transforming a super-additive interpreted system to an equivalent additive one, and transforming a Kripke model satisfying suitable constraints to an equivalent super-additive interpreted system. We finish our completeness proof in Section \ref{sec:completeness} by constructing the canonical model and the appropriate filtration. We also remark on the decidability. We conclude in Section \ref{sec:conclusion}.

\section{Interpreted system for STIT and its Logic}\label{sec:set-up}
Let us first fix the formal language of our discrete-time temporal deontic STIT logic.
\begin{definition}
Let $\mathsf{Agt} = \{1,...,n\}$ be the non-empty finite set of agents and 
let $\mathsf{Prop}$ be the countably infinite set of propositional variables. 
The language $\mathcal{L}_\mathrm{DTDS}$ is given by the following BNF grammar:
\setlength{\abovedisplayskip}{3pt}
\setlength{\belowdisplayskip}{3pt}
\begin{align*}
    \varphi::=p\mid\neg\varphi\mid(\varphi\land\varphi)\mid\Box\varphi\mid [i]\varphi\mid [\mathsf{Agt}]\varphi\mid\mathsf{O}_i\varphi\mid \mathsf{X}\varphi\mid \mathsf{U}(\varphi,\varphi)
\end{align*}
\noindent where $i\in \mathsf{Agt}$, $p\in \mathsf{Prop}$.
The usual abbreviations apply; for example, 
$\Diamond := \lnot\Box\lnot$, 
$\langle i\rangle := \lnot [i] \lnot$, 
$\langle \mathsf{Agt} \rangle := \lnot [\mathsf{Agt}] \lnot$.
\end{definition}

$\Box\varphi$ says that $\varphi$ is necessarily true 
regardless of how the world will develop differently, i.e., a historical necessity. 
$[i]$ is the `Chellas STIT' operator that allows for $[i]\varphi$ even when $\varphi$ 
is a historical necessity. 
$[\mathsf{Agt}]$ is the `Chellas STIT' operator for the whole group; 
$[\mathsf{Agt}]\varphi$ means that the joint action of the whole group 
`sees to it that' $\varphi$. 
$\mathsf{O}_i$ is the `ought' operator for $i$. 
We take it that different agents may be under different duties 
and thus introduce such an operator for each agent. 
We will also make sure that $\mathsf{O}_i\varphi$ is really saying that 
`$i$ ought to see to it that $\varphi$'.
$\mathsf{X}$ is the standard `next time' operator, while $\mathsf{U}$ 
is an `until' operator.
There are many candidate semantics for `until', and ours will be as follows:
$\mathsf{U}(\varphi, \psi)$ says that there is a future time point $t$ 
(including the current time) where $\psi$ is true, 
and for each time $t'$ from \emph{now} until \emph{before} $t$, 
$\varphi$ is true at $t'$. 
Other versions of `until' in discrete linear temporal logic can be defined
by this version of `until' together with `next time'. 



In the following, 
we introduce the interpreted systems-based semantics for 
$\mathcal{L}_{\mathrm{DTDS}}$ 
and the axiomatic system $\mathsf{L}_{\mathrm{DTDS}}$.

\subsection{Interpreted systems}
An interpreted system is essentially a possible world-based Kripke model with a more concrete story for each possible world: each world is temporally complex and consists of a history of stages indexed by $\mathbb{N}$, which allows us to naturally interpret $\mathsf{X}$ and $\mathsf{U}$.  A world is in a state at each stage, and two worlds are historically identical at stage $t$ if they were in identical states at all stages $t' < t$. For any world $w$ at stage $t$, it is well defined what action did each agent perform, and we can also tell what possible actions an agent $i$ can perform by looking at the actions performed by $i$ at stage $t$ in all possible worlds that are historically identical to $w$ at stage $t$. The signature thesis of STIT theory defended by its proponents is that every combination of possible actions for each agent is realized in a historically identical possible world. For the deontic operator, let us also suppose that at each stage of each possible world, it is well defined what possible actions each agent is deontically allowed to perform. The power of $i$ lies in the action she performs. If worlds $w$ and $w'$ are historically identical at stage $t$ but $i$ performs different actions in them at stage $t$, then in world $w$ at stage $t$, we can say that $i$ `sees to it that' she is not living in $w'$. In general, for $i$ to `see to it that' $\varphi$ is for $i$'s action to rule out all non-$\varphi$ worlds, or equivalently, for $\varphi$ to be a consequence of $i$'s performing the action she actually performs. Similarly, for the whole group $\mathsf{Agt}$ to `see to it that' $\varphi$ is for the whole group's joint action to force $\varphi$. 

Since our language $\mathcal{L}_{\mathrm{DTDS}}$ does not care about the internal structure of the states of the worlds or the actions each agent performs, but only whether and when the worlds are in the same states and whether and when the agents are performing the same actions or their allowed actions, it will suffice to use some binary relations to turn the above into mathematical models for $\mathcal{L}_{\mathrm{DTDS}}$. We introduce some notations before the formal definition.
\begin{definition}
    For any set $I$ and sets $\{X_i \mid i \in I\}$ indexed by $I$, 
    $\Pi_{i \in I}X_i$ is the set of all functions $f$ whose domain is $I$
    and for any $i \in I$, $f(i) \in X_i$. 

    For any binary relation $R$ on a set $X$ and any $Y \subseteq X$, 
    let $R[Y] = \{x \in X \mid \exists y \in Y, yRx\}$. 
    We also write $R[y]$ for $R[\{y\}]$. 
    Note that if $R$ is an equivalence relation, then $R[x]$ is the 
    equivalence class $x$ is in. 
    For any equivalence relation $R$ on $X$ and any subset $Y \subseteq X$, 
    $||R||_Y$ is the set of all equivalence classes of $R$ that are contained in $Y$.
    Note that when $Y$ is itself a union of some equivalence classes of $R$, 
    then $||R||_Y = \{R[y] \mid y \in Y\}$. 

    If $R, R'$ are binary relations, $R \circ R'$ is their composition: $x (R \circ R') y$ 
    iff there is $z$ such that $x R z$ and $z R' y$.
\end{definition}
\begin{definition}
    \label{def:interpreted-system}
    A \emph{DTDS interpreted system} (or just \emph{interpreted system}) is a tuple 
    $$
    \mathcal{I} = 
    (H, R_\Box, \{R_{[i]}, R_{\mathsf{O}_i} \mid i \in Agt\}, 
    R_{\mathsf{Agt}}, V)
    $$ 
    where
    \begin{itemize}
    \item $H$ is a non-empty set (the set of possible histories),
    \item $R_\Box$ is an equivalence relation on $H \times \mathbb{N}$ and 
    for each $i \in \mathsf{Agt}$, 
    $R_{[i]}$ is an equivalence relation on $H \times \mathbb{N}$ while 
    $R_{\mathsf{O}_i}$ is a binary relation on $H \times \mathbb{N}$,
    \item $V$ is a function from $\mathsf{Prop}$ to $\wp(H \times \mathbb{N})$,
    \item and the following special properties hold:
    \begin{enumerate}
        \item[(D0)] for any $(h, t), (h', t') \in H \times \mathbb{N}$, 
        if $(h, t)R_\Box(h', t')$ then $t = t'$;
        \item[(D1)] $R_{[i]} \subseteq R_\Box$;
        \item[(D2)] for every $M \in ||R_\Box||$ and 
            every $f \in \Pi_{i \in \mathsf{Agt}}||R_{[i]}||_M$, 
            $\bigcap_{i \in \mathsf{Agt}} f(i) \not= \varnothing$;
        \item[(D3)] $R_{\mathsf{Agt}} = \bigcap_{i \in Agt}R_{[i]}$.
        \item[(D4)] for every $h, h' \in H$ and $t \in \mathbb{N}$, 
        if $(h, t+1)R_\Box(h', t+1)$ then $(h, t)R_{\mathsf{Agt}}(h', t)$;
        \item[(D5)] $R_{\mathsf{O}_i}\subseteq R_{\Box}$;
        \item[(D6)] $R_{\mathsf{O}_i}$ is serial;
        \item[(D7)] $R_{\mathsf{O}_i} \circ R_{[i]} \subseteq R_{\mathsf{O}_i}$;
        \item[(D8)] $R_\Box \circ R_{\mathsf{O}_i} \subseteq R_{\mathsf{O}_i}$.
    \end{enumerate}
\end{itemize}
\end{definition}
To better explain the above conditions, for each binary relation $R_*$ in an DTDS interpreted system with the set of histories $H$ and each $t \in \mathbb{N}$, let us define $R^t_* = \{(h, h') \in H^2 \mid (h, t) R^t_* (h', t)\}$. Then (D0) says that $R_\Box$ can be sliced into $\{R^t_{\Box} \mid t \in \mathbb{N}\}$ and worlds at different stages cannot be historically identical. The equivalence classes of $R^t_{\Box}$ represent the possible developments of the worlds up to stage $t - 1$. Since we want $R_{[i]}$ to mean that two worlds are not only historically identical at stage $t$ but also that $i$ performs the same action at stage $t$,  (D1) is clearly necessary for the `not only' part.  It makes sure that each equivalence relation $R^t_{[i]}$  refines the equivalence relation $R^t_{\Box}$, and for each equivalence class $M$ of $R^t_{\Box}$, the equivalence classes of $R^t_{[i]}$ contained in $M$, namely those sets in $||R_{[i]}||_M$, naturally represents the possible actions  $i$ can take at the historical stage represented by $M$. (D2) then clearly says that all joint actions are possible. It is also often written as ``whenever $w R_\Box v_i$ for each $i \in \mathsf{Agt}$,  then there is $u$ such that $v_i R_{[i]} u$ for each $i \in \mathsf{Agt}$''. $R^{t}_{\mathsf{Agt}}$ clearly should mean that all people performed the same action at stage $t$ across two worlds, and (D3) precisely says this.  (D4) is the famous `no choice between undivided histories' condition.  It is equivalent to that $R^{t+1}_{\Box} \subseteq R^t_{\mathsf{Agt}}$:  if two worlds are historically identical at stage $t+1$, then everyone must have performed the same action at stage $t$ in those two worlds.

Now (D5) to (D8) are motivated by the following picture: at each stage $t$ and each
equivalence class $M$ of $R^t_\Box$, for each $i \in \mathsf{Agt}$,
among the possible actions $i$ can perform,
some (and at least one) are labeled as allowed,
and the agent ought to perform one of the allowed actions.
(D5) to (D8) then essentially says that for any $h \in M$,
$R^t_{\mathsf{O}_i}[h]$ is the union of the equivalence classes
representing these actions.
Since all the allowed actions are chosen from those that $i$ can perform,
we have (D5).
Since there is at least one action that is allowed, we have (D6).
Since `ought' for $i$ is predicated on her actions and not something
she cannot control, and mathematically $R^t_{\mathsf{O}_i}[h]$ is a union
of the equivalence classes of $R^t_{[i]}$, we have (D7).
Finally, since we take it that given an equivalence class $M$ of $R^t_{\Box}$,
which actions are allowed for $i$ is already determined, we have (D8).
To see that this is a reasonable assumption, note that at stage $t$, agent $i$ has no
resources to distinguish different worlds in an equivalence class $M$ of $R^t_{\Box}$ 
as they are historically identical, and the present and the future are yet to be determined.
Thus, it is unreasonable to give $i$ different duties at stage $t$
at different worlds in $M$.

Now we define the expected semantics for $\mathcal{L}_{\mathrm{DTDS}}$ on interpreted systems.
\begin{definition}
Let $\mathcal{I}=(H, R_\Box, \{R_{[i]}, 
R_{\mathsf{O}_i} \mid i \in Agt\}, R_{\mathsf{Agt}}, V)$ 
be an interpreted system. 
The satisfaction of a formula $\varphi$ in $\mathcal{I}$ is defined according to:
\begin{enumerate}
    \item $(\mathcal{I},h,  t) \models p$ iff $(h, t) \in V(p)$
    \item $(\mathcal{I}, h, t) \models \neg \varphi  $ iff $(\mathcal{I}, h, t) \nvDash \varphi$
    \item $(\mathcal{I}, h, t) \models \varphi \land \psi$ iff $(\mathcal{I}, h, t) \models \varphi \text { and }(\mathcal{I}, h, t) \models \psi$
    
    \item $(\mathcal{I},h, t) \models \square \varphi $ iff $ \forall (h^{\prime}, t^{\prime})\in H\times\mathbb{N}:\text{ if }(h, t) R_{\Box}\left(h^{\prime},t^{\prime}\right) , \text{ then }(\mathcal{I}, h', t') \models \varphi $
    \item $(\mathcal{I}, h, t) \models[i] \varphi $ iff $\forall\left(h^{\prime}, t^{\prime}\right) \in H\times\mathbb{N}: \text { if }(h, t) R_{[i]}\left(h^{\prime}, t^{\prime}\right), \text { then }\left(\mathcal{I}, h^{\prime}, t^{\prime}\right) \models \varphi$
    \item $(\mathcal{I},h, t) \models [\mathsf{Agt}] \varphi $ iff $  \forall\left(h^{\prime}, t^{\prime}\right) \in H\times\mathbb{N}:\text{ if }(h, t) R_{\mathsf{Agt}}\left(h^{\prime},t^{\prime}\right) , \text{ then }(\mathcal{I}, h', t') \models \varphi$
    \item $(\mathcal{I}, h, t) \models\mathsf{O}_i \varphi $ iff $  \forall\left(h^{\prime}, t^{\prime}\right) \in H\times\mathbb{N}:\text{ if }(h, t) R_{\mathsf{O}_i}\left(h^{\prime},t^{\prime}\right) , \text{ then }(\mathcal{I}, h', t') \models \varphi$
    \item $(\mathcal{I}, h, t) \models\mathsf{X} \varphi $ iff $ \left(\mathcal{I}, h, t+1\right) \models \varphi$
    \item $(\mathcal{I}, h, t) \models \mathsf{U}(\varphi,\psi)$ iff there is some  $t^{\prime} \ge t$ such that $\left(\mathcal{I}, h, t^{\prime}\right) \models \varphi$ and for all $t^{\prime \prime}$ with $t \le t^{\prime \prime} < t^{\prime}$, $\left(\mathcal{I}, h, t^{\prime \prime}\right) \models \psi$.
\end{enumerate}
We say that $\varphi$ is \emph{valid} if for any interpreted system $\mathcal{I}$, 
any $h$ in the set $H$ of $\mathcal{I}$, and any $t \in \mathbb{N}$, 
$(\mathcal{I}, h, t) \models \varphi$.
\end{definition}
Our goal is to axiomatize the valid formulas.

\subsection{Logic}
The following defines the logic $\mathsf{L}_{\mathrm{DTDS}}$. 
\begin{definition}
    Let $\mathsf{L}_{\mathrm{DTDS}}$ be the smallest subset of $\mathcal{L}_{\mathrm{DTDS}}$
    such that it contains all instances of the following axiom schemata 
    for all $i \in \mathsf{Agt}$:
    \begin{itemize}
        \item {\upshape (PC)} all propositional tautologies 
        \item {\upshape ($\mathsf{S5}_{\Box[i][\mathsf{Agt}]}$)} the $\mathsf{K}$, $\mathsf{T}$, $\mathsf{4}$, and $\mathsf{5}$ axioms for $\Box$, 
        $[i]$, and $[\mathsf{Agt}]$;
        \item {\upshape ($\mathsf{K}_{\mathsf{O}_i\mathsf{X}}$)}
        the $\mathsf{K}$ axioms for 
        $\mathsf{O}_i$ and $\mathsf{X}$
        \item {\upshape (A1)} $\Box\varphi \to [i]\varphi$ 
        \item {\upshape (A2)}
        $\bigwedge_{i \in \mathsf{Agt}}\Diamond[i]\varphi_i \to 
        \Diamond\bigwedge_{i \in \mathsf{Agt}}[i]\varphi_i$ 
        \item {\upshape (A3)}
        $\bigwedge_{i \in \mathsf{Agt}}[i]\varphi_i \to 
        [\mathsf{Agt}]\bigwedge_{i \in \mathsf{Agt}}\varphi_i$ 
        \item {\upshape (A4)} 
        $[\mathsf{Agt}]\mathsf{X}\varphi \to \mathsf{X}\Box\varphi$
        \item {\upshape (A5)} 
        $\Box\varphi \to \mathsf{O}_i\varphi$
        \item {\upshape (A6)} 
        $\mathsf{O}_i \varphi \to \lnot \mathsf{O}_i \lnot \varphi$
        \item {\upshape (A7)} 
        $\mathsf{O}_i \varphi \to \mathsf{O}_i [i] \varphi$ 
        \item {\upshape (A8)} 
        $\mathsf{O}_i \varphi \to \Box \mathsf{O}_i \varphi$ 
        \item {\upshape ($\mathsf{X}$Func)} 
        $\mathsf{X}\varphi \leftrightarrow \lnot \mathsf{X} \lnot \varphi$
        \item {\upshape ($\mathsf{U}$Fix)}
        $\mathsf{U}(\varphi, \psi) \leftrightarrow 
            (\varphi \lor (\psi \land \mathsf{X} \mathsf{U}(\varphi, \psi) ))$
        \end{itemize}
        and is closed under the following rules:
        \begin{itemize}[itemsep=0.4em]
        \item {\upshape ($\mathsf{U}$Ind)}
        $\displaystyle\frac{\chi \to (\lnot \varphi \land \mathsf{X}\chi)}{\chi \to \lnot \mathsf{U}(\varphi, \psi)}$;
        \item {\upshape (MP)} $\displaystyle\frac{\varphi\quad (\varphi \to \psi)}{\psi}$;
        \item {\upshape (Nec)} 
        $\displaystyle\frac{\varphi}{M\varphi}$ 
        for every $M \in \{\Box, [\mathsf{Agt}], \mathsf{X}\} \cup 
        \{[i], \mathsf{O}_i \mid i \in \mathsf{Agt}\}$.
    \end{itemize}
\end{definition}
\noindent The axioms (A1) to (A8), all of which are Sahlqvist formulas,
correspond to the properties (D1) to (D8), except for (A3) and (D3); 
(A3) corresponds to the super-additivity condition (D3*) in Definition \ref{def:super-additivity}.
Their intuitive meaning should be self-explanatory given our exposition of the semantics. 
In particular, (A7) means that $\mathsf{O}_i$, 
which is really saying that `$i$ ought to see to it that'.
Then, the main theorem to which the rest of the paper is devoted is:
\begin{theorem}\label{thm:completeness}
    For any $\varphi \in \mathcal{L}_{\mathrm{DTDS}}$, $\varphi$ is valid iff
    $\varphi \in \mathsf{L}_{\mathrm{DTDS}}$.
\end{theorem}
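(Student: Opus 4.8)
The plan is to prove the two directions separately, with soundness routine and completeness occupying the rest of the paper. For soundness one checks that every axiom schema is valid on every interpreted system and that (MP), (Nec) and ($\mathsf{U}$Ind) preserve validity: the $\mathsf{S5}$ axioms for $\Box, [i], [\mathsf{Agt}]$ and the $\mathsf{K}$ axioms for $\mathsf{O}_i, \mathsf{X}$ fall straight out of the relational clauses; the Sahlqvist axioms (A1)--(A8) are valid because the corresponding frame conditions (D1)--(D8) hold (with (A3) matching the weaker super-additivity condition (D3*) of Definition \ref{def:super-additivity} rather than (D3)); ($\mathsf{X}$Func) reflects functionality of the temporal successor; and ($\mathsf{U}$Fix) together with ($\mathsf{U}$Ind) pin down the least-fixed-point reading of $\mathsf{U}$ given the clause for $\mathsf{U}$.

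For completeness I would argue the contrapositive: from $\varphi \notin \mathsf{L}_{\mathrm{DTDS}}$ construct an interpreted system falsifying $\varphi$. Since $\mathsf{U}$ is a fixed-point operator, the canonical model over maximal $\mathsf{L}_{\mathrm{DTDS}}$-consistent sets is not directly adequate — it may contain points where $\mathsf{U}(\alpha,\beta)$ holds with no reachable witnessing future — so the route is to filtrate it through a finite, Fischer--Ladner-style closure $\Sigma$ of $\varphi$: a finite set containing $\varphi$, closed under subformulas and single negations, and large enough that the ($\mathsf{U}$Fix)-unfolding of every $\mathsf{U}$-formula in $\Sigma$ stays inside $\Sigma$. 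Because $\Box, [i], [\mathsf{Agt}], \mathsf{O}_i$ obey strong structural conditions, I would use the filtration technique for many-dimensional modal logics from \cite{gabbay2003many}, choosing the filtrated relations so that the $\mathsf{S5}$ character of $R_\Box, R_{[i]}, R_{[\mathsf{Agt}]}$ and the inclusions (D1), (D5), (D7), (D8) and the no-choice condition (D4) are all preserved, and so that a truth lemma holds for every formula in $\Sigma$ — with a separate argument at the level of the finite model, driven by ($\mathsf{U}$Ind), to discharge the $\mathsf{U}$-case of that truth lemma.

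The output of filtration is a finite multimodal model meeting every frame condition except that (i) the independence-of-agents condition holds only in the super-additive form (we obtain (D3*) rather than (D3)), and (ii) the model is not yet laid out along $\mathbb{N}$ as a genuine interpreted system. The two transformations announced in Section \ref{sec:general-semantics} then take over. First, unfold the finite model along its $\mathsf{X}$-relation into an $H\times\mathbb{N}$-shaped structure, using functionality of $\mathsf{X}$ from ($\mathsf{X}$Func) so each point has a unique successor and using (D4)/(A4) to glue the temporal slices so that histories historically identical at stage $t{+}1$ agree on the grand-coalition action at stage $t$; this yields a super-additive interpreted system preserving the truth values of all formulas in $\Sigma$. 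Second, convert that super-additive interpreted system into an honest additive one satisfying (D2)/(D3) by the choice-cell splitting construction adapted from \cite{schwarzentruber2012complexity}, multiplying each choice cell by a suitable index set so that $R_{\mathsf{Agt}}$ becomes exactly $\bigcap_i R_{[i]}$ while no formula in $\Sigma$ changes truth value. Composing the three steps produces an interpreted system with a point where $\lnot\varphi$ holds; and since the filtrated model is finite, the construction bounds the size of a falsifying interpreted system, yielding decidability as well.

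I expect the main obstacle to be making the filtration respect the independence condition (D2) and the no-choice condition (D4) simultaneously: (D2) is genuinely second-order in flavour and is not inherited by quotients, which is exactly why the proof is routed through super-additive models and the later splitting construction, so the delicate bookkeeping is ensuring that (A2) still forces the nonempty-intersection property after filtration while (A4) still forces "no choice between undivided histories" across the newly built temporal transitions. A secondary difficulty is guaranteeing that the unfolding along $\mathsf{X}$ preserves the $\mathsf{U}$-truth-lemma — that each $\mathsf{U}(\alpha,\beta)$ true in the finite model is witnessed by an actually reachable future time in the infinite unfolding — which is why the ($\mathsf{U}$Ind)-driven argument has to be carried out at the finite level before unfolding.
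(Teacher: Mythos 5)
Your proposal follows essentially the same route as the paper: soundness by inspection, then completeness via the canonical model, a refined many-dimensional filtration through a filtration-ready closure with a separate ($\mathsf{U}$Ind)-driven argument for the until clause of the truth lemma, a selective unraveling into a super-additive interpreted system, and finally the choice-splitting p-morphism to restore additivity, with decidability from finiteness. The only imprecision is your appeal to ``functionality of $\mathsf{X}$'' in the unfolding step: the filtrated relation $\to^f$ is an existential lifting and need not be functional, which is exactly why the paper's Lemma \ref{lem:selective-unravel} only assumes validity of the relevant instances of ($\mathsf{X}$Func) and selects \emph{acceptable paths} that fulfill every until-eventuality, rather than relying on unique successors.
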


\section{More general semantics}\label{sec:general-semantics}
We will follow the standard strategy of first building the canonical model
of our axiomatic system $\mathsf{L}_{\mathrm{DTDS}}$, 
and then transform that into an interpreted system
to satisfy some consistent formula $\varphi$. 
This process goes through two intermediate kinds of models which we call
`super-additive interpreted systems' and `Kripke premodels'. 
In the following two subsections, 
we define them and and also the required transformations. 

\subsection{Super-additive interpreted systems}
We first define super-additive interpreted systems. 
They are just slight generalization of the interpreted systems, 
where only $R_{\mathsf{Agt}} \subseteq \bigcap_{i \in \mathsf{Agt}}R_{[i]}$
is required. 
\begin{definition}\label{def:super-additivity}
    A super-additive interpreted system is a tuple 
    $\mathcal{I}=(H, R_\Box, 
    \{R_{[i]},  R_{\mathsf{O}_i} \mid i \in \mathsf{Agt}\}, 
    R_{\mathsf{Agt}}, V)$ 
    satisfying all the requirements in Definition \ref{def:interpreted-system} 
    except that {\upshape (D3)} is replaced by the following:
    \begin{itemize}
    \item[]
    \begin{itemize}
        \item[\upshape (D3*)] $R_{\mathsf{Agt}} \subseteq \bigcap_{i \in \mathsf{Agt}}R_{[i]}$.
    \end{itemize}
    \end{itemize}
\end{definition}
Now we show that any super-additive interpreted system can be 
transformed into an interpreted system satisfying the same formulas. 
In fact, every super-additive interpreted system is the p-morphic 
image of some interpreted system. 
We first need a lemma for this.
\begin{lemma}\label{lem:choice-expansion}
    For any non-empty sets $X$ and $I$, 
    there is a function $F$ from $X^I$ to $X$ 
    such that for any $i \in I$ and $x \in X$, 
    \[
        \{F(f_{-i} \cup \{(i, x)\}) \mid f_{-i} \in X^{I \setminus \{i\}}\} = X.
    \]
\end{lemma}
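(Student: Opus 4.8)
The plan is to write $F$ down explicitly, combining two distinguished coordinates of the input function through a group operation on $X$. First dispose of the degenerate case $|X| = 1$, where there is nothing to prove. Assuming now $|X| \geq 2$, note that the statement can only hold when $|I| \geq 2$: for $I = \{i\}$ the set $\{F(f_{-i} \cup \{(i,x)\}) \mid f_{-i} \in X^{I \setminus \{i\}}\}$ is a singleton, hence $\neq X$. So we also assume $|I| \geq 2$.

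Next, fix once and for all a group structure $(X, \cdot, e)$ on the set $X$. Such a structure exists on every non-empty set by a standard application of the axiom of choice — one may take the cyclic group of order $|X|$ when $X$ is finite, and transport a group structure along a bijection with $\bigoplus_{\kappa}\mathbb{Z}/2\mathbb{Z}$ (where $\kappa = |X|$) when $X$ is infinite. Choose two distinct indices $i_0, i_1 \in I$ and define $F \colon X^I \to X$ by $F(f) = f(i_0) \cdot f(i_1)$.

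It then remains to verify, for each $i \in I$ and $x \in X$, that $f_{-i} \mapsto F(f_{-i} \cup \{(i,x)\})$ is onto $X$; the point is that fixing the single coordinate $i$ always leaves at least one of $i_0, i_1$ free, and that left and right translations in a group are bijections. In detail: if $i \notin \{i_0, i_1\}$, both $f_{-i}(i_0)$ and $f_{-i}(i_1)$ vary freely over $X$, and already the inputs with $f_{-i}(i_1) = e$ make $F = f_{-i}(i_0)$ range over all of $X$; if $i = i_0$, then $F = x \cdot f_{-i}(i_1)$ with $f_{-i}(i_1)$ free, so $F$ ranges over $x \cdot X = X$; and $i = i_1$ is symmetric, with $F = f_{-i}(i_0) \cdot x$ ranging over $X \cdot x = X$. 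Thus the displayed set is $X$ in every case.

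I do not anticipate a real obstacle: the entire argument is the observation that group translations are surjective, together with the pigeonhole-style remark that one coordinate is always free. What needs a moment's care is only (i) noticing that $|I| \geq 2$ is necessary, and (ii) deciding how much generality to demand of $X$: appealing to ``every set admits a group structure'' keeps the proof uniform, but when $I$ is finite (as in the intended application, where $I = \mathsf{Agt}$) and one relabels $X$ as $\mathbb{Z}/|X|\mathbb{Z}$, one can equally take the fully explicit $F(f) = \big(\sum_{j \in I} f(j)\big) \bmod |X|$.
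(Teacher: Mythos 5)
Your proof is correct and takes a genuinely different route from the paper's. The paper splits on the size of $I$: for $|I| \ge 3$ it uses a ``super-majority vote'' function (output $x$ if all but at most one coordinate of $f$ equals $x$, else a fixed default $x_0$), and for $|I| = 2$ it falls back on explicit arithmetic --- addition modulo $|X|$ when $X$ is finite, and an ad hoc ordinal-parity construction when $X$ is infinite. Your single definition $F(f) = f(i_0)\cdot f(i_1)$ for a group structure on $X$ subsumes all of these cases at once, the verification reducing to the surjectivity of group translations; in the finite-$X$, $|I|=2$ case it literally coincides with the paper's $(x+y) \bmod (m+1)$. Both arguments invoke the axiom of choice only for infinite $X$ (the paper to well-order $X$, you to impose a group structure), and, as the paper's footnote notes, only finite $X$ is needed for the completeness proof, where your fully explicit $\bigl(\sum_{j} f(j)\bigr) \bmod |X|$ would also do. One further point in your favour: you observe explicitly that the statement as printed fails when $|I| = 1$ and $|X| \ge 2$ (the displayed set is then a singleton), whereas the paper's proof silently treats only the cases $|I| \ge 2$; since the lemma is later applied with $I = \mathsf{Agt}$, which is merely assumed non-empty and finite, this is a hypothesis that ought to be added to the statement.
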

\begin{proof}
    When $|I| \ge 3$, we can prove this easily by first fix an $x_0 \in X$ and 
    define $F$ by `super-majority vote': for any $f \in X^I$, think of $f(i)$ 
    as $i$'s vote, and if there is $x \in X$ such that all but at most one $i \in I$ 
    votes for $x$, then let $F(f) = x$; otherwise let $F(f) = x_0$. 
    It is easy to see that this $F$ satisfies the requirement. 

    When $|I| = 2$, let us assume without loss of generality that $X$ is an 
    initial ordinal $\kappa$,\footnote{When $X$ is infinite, 
    this uses the axiom of choice.}
    and also let us assume that $I = \{0, 1\}$ and write any 
    $f \in X^I$ as a pair $(x, y)$ where $x = f(0)$ and $y = f(1)$. 
    If $X$ is finite, then $X$ is simply some $\{0, 1, 2, \dots, m\}$. 
    In this case, define $F((x, y)) = (x + y) \mod (m+1)$, 
    and it is easy to see that this works.\footnote{For the completeness proof, we will only apply this lemma to finite models, and thus the rest of the proof is not needed.}
    If $X$ is infinite, then recall that every ordinal $\gamma$ can be uniquely 
    written as $\omega \cdot \alpha + m$ where $\alpha$ is an ordinal and 
    $m$ is a natural number, and the parity of $\gamma$ is defined as 
    the parity of $m$. 
    Now define $F((\gamma_1, \gamma_2))$ as follows: first decompose
    $\gamma_1 = \omega \cdot \alpha_1 + m_1$ and 
    $\gamma_2 = \omega \cdot \alpha_2 + m_2$; then, if $m_1 + m_2$ is odd, 
    let $F((\gamma_1, \gamma_2)) = \alpha_1 + \floor{m_1/2}$,
    else let $F((\gamma_1, \gamma_2)) = \alpha_2 + \floor{m_2/2}$.
    When $\kappa$ is the countable initial ordinal $\omega$, 
    $\alpha_1$ and $\alpha_2$ are always $0$, 
    but both $\{\floor{m / 2} \mid m \text{ is odd}\}$ and 
    $\{\floor{m / 2} \mid m \text{ is even}\}$ are $\omega$. 
    When $\kappa$ is uncountable, for both $m = 0, 1$, 
    $\{\alpha \mid \omega \cdot \alpha + m\in \kappa\}$ is just $\kappa$. 
    Then it is easy to see that $F$ satisfies the requirement.
\end{proof}
Now we define p-morphisms bewteen super-additive interpreted systems 
without the valuation parts, which we call super-additive interpreted frames.
\begin{definition}
    A (resp. super-additive) DTDS interpreted \emph{frame} is a 
    (resp. super-additive) DTDS interpreted system without the valuation $V$ part. 

    Let $\mathbf{I} = (H, R_\Box, \{R_{[i]}, R_{\mathsf{O}_i} \mid i \in \mathsf{Agt}\}, R_{\mathsf{Agt}})$
    and 
    $\mathbf{I}' = (H', R'_\Box, \{R'_{[i]}, R'_{\mathsf{O}_i} \mid i \in \mathsf{Agt}\}, R'_{\mathsf{Agt}})$
    be two super-additive interpreted frames. 
    A \emph{p-morphism} $\pi$ is a function from $H$ to $H'$ such that 
    for any $t \in \mathbb{N}$ and for any $O \in \{\Box, [\mathsf{Agt}]\} \cup 
    \{[i], \mathsf{O}_i\mid i \in \mathsf{Agt}\}$,
    \begin{itemize}
        \item for any $g, h \in H$, 
            if $(g, t)R_O(h, t)$ then $(\pi(g), t)R'_O(\pi(h), t)$, and 
        \item for any $g \in H$, if there is $h' \in H'$ such that
        $(\pi(g), t)R'_O(h', t)$, then there is $h \in H$ such that 
        $\pi(h) = h'$ and $(g, t)R_O(h, t)$.
    \end{itemize}
\end{definition}
As is in the standard modal case, p-morphism reflects satisfiability. 
\begin{lemma}\label{lem:frame-to-model}
    Let $\mathbf{I}$ and $\mathbf{I}'$ be two DTDS super-additive interpreted frames. 
    If $\pi$ is a surjective p-morphism from $\mathbf{I}'$ to $\mathbf{I}$, 
    then for any valuation $V$ for $\mathbf{I}$, there is a valuation $V'$ for 
    $\mathbf{I}$ such that for any $(h', t) \in H' \times \mathbb{N}$, 
    $(\mathbf{I}, V), (\pi(h'), t)$ and $(\mathbf{I}', V'), (h', t)$ satisfy
    the same formulas.
\end{lemma}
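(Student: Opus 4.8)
The plan is to run the standard bounded-morphism (p-morphism) argument, adapted to the extra temporal coordinate. First I would define $V'$ on $\mathbf{I}'$ by pulling $V$ back along $\pi$: for each $p \in \mathsf{Prop}$, set $V'(p) = \{(h', t) \in H' \times \mathbb{N} \mid (\pi(h'), t) \in V(p)\}$. This is visibly a valuation for $\mathbf{I}'$ — and here I would flag that the hypothesis as printed should say ``a valuation $V'$ for $\mathbf{I}'$''. The claim, proved by induction on the structure of $\varphi \in \mathcal{L}_{\mathrm{DTDS}}$, is that for every $(h', t) \in H' \times \mathbb{N}$ we have $(\mathbf{I}', V'), (h', t) \models \varphi$ iff $(\mathbf{I}, V), (\pi(h'), t) \models \varphi$.

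The propositional-variable case is exactly the definition of $V'$, and the Boolean cases are immediate from the induction hypothesis. The temporal cases use nothing about $\pi$ except that it acts only on the history coordinate and leaves $\mathbb{N}$ fixed: $(\mathbf{I}', V'), (h', t) \models \mathsf{X}\varphi$ iff $(\mathbf{I}', V'), (h', t+1) \models \varphi$, which by the induction hypothesis at $(h', t+1)$ is equivalent to $(\mathbf{I}, V), (\pi(h'), t+1) \models \varphi$, i.e. to $(\mathbf{I}, V), (\pi(h'), t) \models \mathsf{X}\varphi$; and the $\mathsf{U}$ clause unwinds the same way, quantifying over $t' \ge t$ and $t \le t'' < t'$ along the single history $h'$, resp. $\pi(h')$, and invoking the induction hypothesis at each relevant stage.

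The modal cases $\Box$, $[i]$, $[\mathsf{Agt}]$, $\mathsf{O}_i$ are where the p-morphism conditions do their work. I would first record that, by (D0) together with (D1), (D3*) and (D5), each of the relations $R_\Box$, $R_{[i]}$, $R_{\mathsf{Agt}}$, $R_{\mathsf{O}_i}$ (and the primed versions) relates only points with the same time coordinate; hence the satisfaction clause for each of these operators in effect quantifies inside a single time slice, which is exactly the form in which the ``forth'' and ``back'' conditions of the p-morphism are phrased. With that alignment in place, the argument is the textbook one: for $\Box$, if $(\mathbf{I}, V), (\pi(h'), t) \models \Box\varphi$ and $(h', t) R'_\Box (g', t)$, the forth condition gives $(\pi(h'), t) R_\Box (\pi(g'), t)$, so $\varphi$ holds at $(\pi(g'), t)$ and the induction hypothesis yields it at $(g', t)$; conversely, if $(\mathbf{I}', V'), (h', t) \models \Box\varphi$ and $(\pi(h'), t) R_\Box (g, t)$, the back condition supplies $g' \in H'$ with $\pi(g') = g$ and $(h', t) R'_\Box (g', t)$, so $\varphi$ holds at $(g', t)$ and the induction hypothesis transports it to $(g, t) = (\pi(g'), t)$. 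The $[i]$, $[\mathsf{Agt}]$, $\mathsf{O}_i$ cases are verbatim the same with the matching relation.

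I do not expect a genuine obstacle; this is a routine transfer lemma. The only points deserving care are (i) confirming that the per-time-slice formulation of the p-morphism matches the satisfaction clauses, which is guaranteed by the structural conditions (D0), (D1), (D3*), (D5), and (ii) noticing that the temporal operators require no morphism condition at all, since $\pi$ is constant on the $\mathbb{N}$ coordinate. Surjectivity of $\pi$ in fact plays no role in establishing the equivalence at points of the form $(\pi(h'), t)$; it is carried in the statement because the lemma will be applied to conclude that $\mathbf{I}$ and $\mathbf{I}'$ satisfy exactly the same formulas.
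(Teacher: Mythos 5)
Your proposal is correct and matches the paper's proof, which likewise defines $V'$ by pulling $V$ back along $\pi$ and then appeals to a standard induction on formulas (the details you spell out — the per-time-slice alignment and the forth/back conditions — are exactly what that induction amounts to). Your note that the statement should read ``a valuation $V'$ for $\mathbf{I}'$'' is a correct observation about a typo.
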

\begin{proof}
    The valuation $V'$ is constructed in the natural way: $(h', t) \in V'(p)$ 
    iff $(\pi(h'), t) \in V(p)$. The rest is proved by a standard induction.
\end{proof}
Now we show that any super-additive interpreted frame is the p-morphic image
of an interpreted frame.
\begin{lemma}\label{lem:convert-to-additive}
    Let $\mathbf{I}$ be a super-additive DTDS interpreted frame. 
    Then there is a DTDS interpreted frame $\mathbf{I}'$ and a surjective function $\pi$
    from $H$ to $H'$ that is also a p-morphism.
\end{lemma}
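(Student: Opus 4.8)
The plan is to realize $\mathbf{I}'$ as a quotient of the given super-additive frame $\mathbf{I}$ and to let $\pi\colon H\to H'$ be the associated quotient map; surjectivity is then automatic, and the entire content of the lemma is concentrated in (a) choosing the equivalence relation $\approx$ on $H$ so that the quotient frame satisfies the full additivity condition (D3), and (b) checking that the quotient map meets the forth and back clauses of the p-morphism definition at every stage. Since the only way a genuine interpreted frame differs from a super-additive one is that $\bigcap_{i\in\mathsf{Agt}}R_{[i]}$ must \emph{coincide} with $R_{\mathsf{Agt}}$, the natural guess is to identify histories that the individual actions cannot separate. Concretely, I would take $\approx$ to be the least equivalence relation on $H$ such that $g\approx h$ whenever $g$ and $h$ perform the same individual action for every agent at some common stage, i.e.\ whenever $(g,t)\,\bigl(\bigcap_{i}R_{[i]}\bigr)\,(h,t)$ for some $t$, and then define each quotient relation $R'_{O}$ as the image of $R_{O}$ under $\pi$.

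Next I would record the structural facts that drive the p-morphism verification, where the forth clause is immediate from the definition of the image relations and the work is in the back clause. For $O\in\{\Box,[\mathsf{Agt}]\}\cup\{[i]\}$ the back clause should reduce to an absorption argument: (D1) gives $\approx\ \subseteq R_\Box$ stage by stage, the definition gives $\approx\ \subseteq R_{[i]}$ at the stages where agreement was imposed, and transitivity of the $\mathsf{S5}$ relations lets one absorb an $\approx$-step into an $R_O$-step so that a witness can be chosen inside the required class. The key cross-stage tool is (D4): if two histories are historically identical at stage $t+1$ then they share the grand-coalition action at stage $t$, so agreement of individual actions propagates upward into agreement of the joint action. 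For $O=\mathsf{O}_i$ the back clause is exactly where (D8) is used, since from $(g,t)R_\Box(g',t)$ and $R_\Box\circ R_{\mathsf{O}_i}\subseteq R_{\mathsf{O}_i}$ one transports an $R_{\mathsf{O}_i}$-successor of $g'$ to one of $g$, while (D6) guarantees such a successor exists; the inherited conditions (D0)--(D2) and (D5)--(D7) follow by the same image computations.

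The step I expect to be the main obstacle is establishing (D3) in the quotient, and it is precisely this step that forces the exact choice of $\approx$. The difficulty is the super-additive gap itself: when $[g]\,\bigl(\bigcap_i R'_{[i]}\bigr)\,[m]$ holds, the witnesses establishing $[g]R'_{[i]}[m]$ may be selected \emph{separately for each agent} $i$, so one cannot directly conclude that $g$ and $m$ share a single joint action, and hence cannot directly conclude $[g]R'_{\mathsf{Agt}}[m]$. Closing this gap requires showing that the identifications already made by $\approx$ are exactly enough—that every pair witnessing individual-action agreement has been collapsed or is already $R_{\mathsf{Agt}}$-related—and here (D4) again does the heavy lifting by tying the individual actions at stage $t$ to historical identity at stage $t+1$. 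The delicate part is that $\approx$ is built as a transitive closure of agreements taken at \emph{different} stages, so it need no longer sit inside each $R_{[i]}$ at every stage; one must therefore verify simultaneously that this enlarged $\approx$ remains a bisimulation (for the back clauses above) and that it forces (D3), and reconciling these two demands across all stages and all agents is the technical heart of the argument. Once this bookkeeping is carried out, $\mathbf{I}'=\mathbf{I}/\!\approx$ is a genuine interpreted frame and $\pi$ is the desired surjective p-morphism from $H$ to $H'$.
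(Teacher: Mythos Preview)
Your quotient construction runs in the wrong direction and cannot be made to work. (Note first that the statement contains a typo: from the way the lemma is invoked together with Lemma~\ref{lem:frame-to-model} in the completeness proof, the p-morphism is meant to go from $H'$ to $H$, with the additive $\mathbf{I}'$ as the \emph{domain}.) Additivity demands that the profile of individual actions determine the grand-coalition action; when $R_{\mathsf{Agt}}$ is strictly finer than $\bigcap_i R_{[i]}$, the individual actions carry too little information, and the cure is to \emph{enrich} them, not to collapse histories. Passing to a quotient can only coarsen the $R_{[i]}$'s, which moves $\bigcap_i R'_{[i]}$ further from, not closer to, $R'_{\mathsf{Agt}}$.

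The concrete failure is in the back clause at stages \emph{after} the one where you imposed agreement. If $(g,t)\,\bigl(\bigcap_i R_{[i]}\bigr)\,(h,t)$ but $(g,t)$ and $(h,t)$ lie in distinct $R_{\mathsf{Agt}}$-cells, then by (D4) the points $(g,t{+}1)$ and $(h,t{+}1)$ lie in \emph{different} $R_\Box$-classes and may have entirely unrelated $R_{[i]}$-structure. Yet your $\approx$ forces $[g]=[h]$, so for the back clause at stage $t{+}1$ you would need, given any $(h,t{+}1)R_{[i]}(h',t{+}1)$, some $h''\approx h'$ with $(g,t{+}1)R_{[i]}(h'',t{+}1)$; nothing supplies such an $h''$. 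Your invocation of (D4) is in the wrong direction: it pushes $R_\Box$-agreement at $t{+}1$ \emph{down} to $R_{\mathsf{Agt}}$-agreement at $t$, never the reverse. You flag exactly this difficulty in your last paragraph but then assert it is ``bookkeeping''; it is not---it is where the argument breaks.

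The paper's construction goes the other way: each history $h$ is blown up into many refinements $(h,\mathfrak{h})$, where $\mathfrak{h}$ records at each stage an enriched action for every agent, the enrichment being engineered via Lemma~\ref{lem:choice-expansion} so that the profile of enriched actions \emph{does} determine the $R_{\mathsf{Agt}}$-cell. The p-morphism $\pi\colon H'\to H$ is then simply $(h,\mathfrak{h})\mapsto h$.
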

\begin{proof}
    For every equivalence class $M$ of $R_\Box$, let us construct the following:
    \begin{itemize}
        \item For any $M \in ||R_{\Box}||$, we have commented before how 
        $||R_{[i]}||_M$ represents the action space that agent $i$ has 
        at moment $M$. For more suggestive notations, 
        let us call this $A^i_{M}$, and let $A_M = \Pi_{i \in \mathsf{Agt}}A^i_m$.
        In light of Lemma \ref{lem:choice-expansion} and what we are trying
        to achieve, we refine $A^i_M$ to 
        $\mathbb{A}^i_{M} = A^i_M \times ||R_{\mathsf{Agt}}||_M$. 
        Let $\mathbb{A}_M$ be the refined joint-action space 
        $\Pi_{i \in \mathsf{Agt}} \mathbb{A}^i_M$ at moment $M$. 
        Note that any $f \in \mathbb{A}_{M}$ can be naturally decomposed 
        into $f_1 \in A_M$ and $f_2 \in (||R_{\mathsf{Agt}}||_M)^\mathsf{Agt}$ where $f(i) = (f_1(i), f_2(i))$. 
        Conversely, use `$+$' to denote the pointwise pair formation that 
        combines $f_1$ and $f_2$ into $f$.
        \item Now apply Lemma \ref{lem:choice-expansion} to 
        $||R_{\mathsf{Agt}}||_M$ and $\mathsf{Agt}$ to obtain $F_M$. 
        For any $a \in A_M$, 
        let $D_a = \bigcap_{i \in \mathsf{Agt}}a(i)$ and also
        fix a cell $c_a \in ||R_{\mathsf{Agt}}||_{D_a}$. 
        Then define function $G_M$ from $\mathbb{A}_M$ to $||R_{\mathsf{Agt}}||_M$
        as follows: for any 
        $f \in \mathbb{A}_M$, if $F_M(f_2) \in D_{f_1}$, 
        then let $G_M(f) = F_M(f_2)$, otherwise let $G_M(f) = c_{f_1}$.
        The extra $c_{f_1}$ makes sure that $G_M(f)$ is always in $D_{f_1}$.
        \item For any history $h \in H$, 
        a \emph{refinement} $\mathfrak{h}$ of it is a function defined 
        on $\mathbb{N}$ such that for each $t \in \mathbb{N}$, 
        letting $M = R_{\Box}[(h, t)]$ (the $R_{\Box}$ equivalence class that
        $(h, t)$ is in), $\mathfrak{h}(t)$ is in $\mathbb{A}_M$ and 
        $G_M(\mathfrak{h}(t))$ is $R_{\mathsf{Agt}}[(h, t)]$. Note that this implies 
        $\mathfrak{h}(t)_1(i) = R_{[i]}[(h, t)]$ for all $i \in \mathsf{Agt}$. 
        \item For each $h \in H$, let $\mathfrak{H}(h)$ be the set of all of
        its refinements. The set $H'$ of histories in the DTDS interpreted frame 
        $\mathbf{I}'$ we construct is the disjoint union of these $\mathfrak{H}(h)$.
        Formally, let 
        $H' = \{(h, \mathfrak{h}) \mid h \in H, \mathfrak{h} \in \mathfrak{H}(h)\}$.
        Naturally, we define the p-morphism $\pi$ by 
        $\pi((h, \mathfrak{h})) = h$.
        \item Now we define the relations in $\mathbf{I}'$. 
        We write $(h, \mathfrak{h}, t)$ for $((h, \mathfrak{h}), t)$. 
        \begin{itemize}
            \item $(h, \mathfrak{h}, t) R'_\Box (h', \mathfrak{h}', t')$ iff 
            $t = t'$, $(h, t) R_\Box (h', t)$, and for all $t_0 < t$, $\mathfrak{h}(t_0) = \mathfrak{h}'(t_0)$.
            \item $(h, \mathfrak{h}, t) R'_{[i]} (h', \mathfrak{h}', t')$ iff 
            $(h, \mathfrak{h}, t)R'_\Box (h', \mathfrak{h}', t')$ and $\mathfrak{h}(t)(i) = \mathfrak{h}'(t)(i)$.
            \item $R'_{\mathsf{Agt}}$ is the intersection of all $R'_{[i]}$.
            \item $(h, \mathfrak{h}, t) R'_{\mathsf{O}_i} (h', \mathfrak{h}', t')$ iff 
            $(h, \mathfrak{h}, t)R'_\Box(h', \mathfrak{h}', t')$ and $(h, t) R_{\mathsf{O}_i} (h', t)$.
        \end{itemize}
    \end{itemize}
    We need to verify that $\mathbf{I}'$ is a DTDS interpreted frame.
    (D0) is trivial by definition. (D1) is similar: if $(h, t)$ and $(h', t)$ 
    are in two different equivalence classes $M$ and $M'$ of $R_\Box$, 
    then $\mathbb{A}^i_M$ and $\mathbb{A}^i_{M'}$ are disjoint, 
    so $\mathfrak{h}(t)(i)$ and $\mathfrak{h}'(t)(i)$ must be different. 
    For (D2), note that for any $(h, \mathfrak{h}, t) \in H' \times \mathbb{N}$, 
    if we have $(h^i, \mathfrak{h}^i, t) R'_\Box (h, \mathfrak{h}, t)$ 
    for all $i \in \mathsf{Agt}$, then all $(h^i, t)$ are in the same moment 
    $M = R_\Box[(h, t)]$, and thus we have a joint refined action $f \in \mathbb{A}_M$
    where $f(i) = \mathfrak{h}^i(t)(i)$. Using $G_M$, we obtain 
    $G_M(f) \in ||R_{\mathsf{Agt}}||_{D_{f_1}}$.
    Pick any $h' \in G_M(f)$. This immediately means that $(h', t) \in M$,
    and thus for any $t' < t$, $(h', t') R_{\mathsf{Agt}} (h, t')$.
    Now construct a refinement $\mathfrak{h}'$ of $h'$ as follows:
    \begin{itemize}
        \item if $t' < t$, let $\mathfrak{h}'(t') = \mathfrak{h}(t')$, 
        \item $\mathfrak{h}'(t) = f$, and 
        \item for $t' > t$, let $M' = R_{\Box}[(h', t')]$ and let 
        $\mathfrak{h}'(t')$ be any joint refined action in $\mathbb{A}_{M'}$
        such that $G_{M'}(\mathfrak{h}'(t'))$ is $R_{\mathsf{Agt}}[(h', t')]$.
    \end{itemize}
    It should be very obvious, using Lemma \ref{lem:choice-expansion}, that 
    $G_{M'}$ from $\mathbb{A}_{M'}$ to $||R_{\mathsf{Agt}}||_{M'}$ is 
    surjective. In fact, Lemma \ref{lem:choice-expansion} ensures surjectivity 
    even if you fix one agent's action. 
    Then, $\mathfrak{h}'$ is a refinement of $h'$, 
    $(h', \mathfrak{h'}, t) R'_{\Box} (h, \mathfrak{h}, t)$, and 
    $(h^i, \mathfrak{h}^i, t) R'_{[i]} (h', \mathfrak{h'}, t)$ 
    for all $i \in \mathsf{Agt}$.

    (D3) is automatic by definition, and (D4) is almost the same, since 
    $R'_{\Box}$ explicitly requires identical joint refined actions in 
    previous stages. The verification of (D5) to (D8) are also easy. 
    
    It remains to verify that $\pi$ is indeed a surjective p-morphism from
    $\mathbf{I}'$ to $\mathbf{I}$. That $\pi$ is surjective is trivial.
    The forward condition for $\pi$ being a p-morphism is also almost trivial. 
    Note that $\mathfrak{h}(t)$ also encodes the information of which 
    unrefined actions the agents take at $t$: 
    if $\mathfrak{h}(t)(i) = \mathfrak{h}'(t)(i)$, then in particular 
    $\mathfrak{h}(t)_1(i) = \mathfrak{h}'(t)_1(i)$, which means 
    $(h, t)$ and $(h', t)$ must be in the same $R_{[i]}$ equivalence class. 

    For the backward conditions, take $[\mathsf{Agt}]$ for example first. 
    Pick any $(h, \mathfrak{h}, t)$ and $(h', t)$ such that 
    $(h, t) R_{\mathsf{Agt}} (h', t)$. 
    Note that this means $R_{\mathsf{Agt}}[(h', t)] = R_{\mathsf{Agt}}[(h, t)]
    = G_M(\mathfrak{h}(t))$ where $M = R_{\Box}[(h, t)]$, since $\mathfrak{h}$
    refines $h$. 
    But then we can easily construct a refinement $\mathfrak{h}'$ of $h'$ such that 
    $(h, \mathfrak{h}, t) R'_{\mathsf{Agt}} (h', \mathfrak{h}', t)$.
    Just let $\mathfrak{h}'$ be identical to $\mathfrak{h}$ for all $t' \le t$, 
    and for $t' > t$, pick any $f \in \mathbb{A}_{M'}$ where $M' = R_\Box[(h', t')]$
    so that $G_{M'}(f) = R_{\mathsf{Agt}}[(h', t')]$. The slightly non-trivial case
    is $[i]$, and here Lemma \ref{lem:choice-expansion} is used. 
    Pick any $(h, \mathfrak{h}, t)$ and $(h', t)$ such that 
    $(h, t) R_{[i]} (h', t)$. Let $M = R_{\Box}[(h, t)]$ and 
    $N = R_{\mathsf{Agt}}[(h', t)]$. Note that $(h', t) \in M$.
    By Lemma \ref{lem:choice-expansion} and our definition of $G_M$, 
    there is a $f \in \mathbb{A}_M$ such that $f(i) = \mathfrak{h}(t)(i)$ 
    and $G_M(f) = N$. 
    Indeed, let $f_1(j) = R_{[j]}[(h', t)]$ for all $j \in \mathsf{Agt}$, 
    and apply Lemma \ref{lem:choice-expansion} to obtain $f_2$ so that 
    $f_2(i) = \mathfrak{h}(t)_2(i)$ yet $F_M(f_2) = N$. 
    Then $f = f_1 + f_2$ is what we need.
    By construction $N \in D_{f_1}$, so $G_M(f) = F_M(f_2) = N$, 
    and $f_1(i) = R_{[i]}[(h', t)] = R_{[i]}[(h, t)]$ since
    $(h, t) R_{[i]} (h', t)$. 
    This means $f_1(i) = \mathfrak{h}(t)_1(i)$ since the later must be 
    $R_{[i]}[(h, t)]$ for $\mathfrak{h}$ to be a refinement of $h$. 
    Then indeed $f(i) = \mathfrak{h}(t)(i)$. With this $f$, define 
    $\mathfrak{h}'$ as before: 
    \begin{itemize}
        \item if $t' < t$, let $\mathfrak{h}'(t') = \mathfrak{h}(t')$, 
        \item $\mathfrak{h}'(t) = f$, and 
        \item for $t' > t$, let $M' = R_{\Box}[(h', t')]$ and let 
        $\mathfrak{h}'(t')$ be any joint refined action in $\mathbb{A}_{M'}$
        such that $G_{M'}(\mathfrak{h}'(t'))$ is $R_{\mathsf{Agt}}[(h', t')]$.
    \end{itemize}
    Then $\mathfrak{h}'$ refines $h'$, and 
    $(h, \mathfrak{h}, t)R'_{[i]}(h', \mathfrak{h}', t)$.
\end{proof}

\subsection{Kripke premodels}
The super-additive interpreted system satisfying 
some given consistent formula $\varphi$ will be obtained 
by choosing appropriate paths from what we call a DTDS Kripke premodel where 
the temporal structure is not explicitly coded by natural numbers,  
but given also by a binary relation $\to$. 
One may think of this as a selective unraveling. 
\begin{definition}
    A \emph{DTDS Kripke premodel} is a tuple 
    $\mathcal{M} = (S, R_\Box, 
    \{R_{[i]}, R_{\mathsf{O}_i}\mid i \in \mathsf{Agt}\},
    R_{\mathsf{Agt}}, 
    \to, V)$ where
    \begin{itemize}
        \item $S$ is a non-empty set,
        \item $R_\Box$ is an equivalence relation on $S$,
        for every $i \in \mathsf{Agt}$, $R_{[i]}$ is an equivalence on $S$ while
        $R_{\mathsf{O}_i}$ is a binary relation on $S$, 
        $\to$ is a serial binary relation on $S$,
        \item $V$ is a function from $\mathsf{Prop}$ to $S$, 
        \item (D1), (D2), (D3*), and (D5) to (D8) hold as well, 
        \item and instead of (D4), (D4*) 
        ${\to} \circ {R_{\Box}} \subseteq {R_{\mathsf{Agt}}} \circ {\to}$ holds.
    \end{itemize}
\end{definition}
It is important to note that we allow a state $s \in S$ to have multiple 
$\to$-predecessors and multiple $\to$-successors.
For future convenience, we define a semantics for $\mathcal{L}_{\mathrm{DTDS}}$ on these
Kripke premodels, even though this is not the intended semantics 
(for example, it does not validate $X\varphi \leftrightarrow \lnot X \lnot \varphi$).
\begin{definition}
    We recursively define satisfaction for formulas $\varphi \in \mathcal{L}_{\mathrm{DTDS}}$
    on DTDS Kripke premodels as follows:
    let $\M = (W,R_\Box,\{R_{[i]}, R_{\mathsf{O}_i} \mid i\in Agt\}, R_{\mathsf{Agt}}, \to, V)$ 
    be a DTDS Kripke premodel, and
    \begin{itemize}
    \item $\M,w\vDash p$ iff $w\in V(p)$;
    \item $\M,w\vDash\neg\varphi$ iff $\M,w \nvDash \varphi$;
    \item $\M,w\vDash\varphi\land\psi$ iff $\M,w \vDash \varphi$ and $\M,w \vDash \psi$;
    \item $\M,w\vDash\Box\varphi$ iff $\forall u\in R_{\Box}[w]$, $\M,u\vDash\varphi$;
    \item $\M,w\vDash[i]\varphi$ iff $\forall u\in R_{[i]}[w]$, $\M,u\vDash\varphi$;
    \item $\M,w\vDash[Agt]\varphi$ iff $\forall u\in R_{\mathsf{Agt}}[w]$, $\M,u\vDash\varphi$;
    \item $\M,w\vDash\mathsf{O}_{i}\varphi$ iff $\forall u\in R_{\mathsf{O}_i}[w]$, $\M,u\vDash\varphi$;
    \item $\M,w\vDash\mathsf{X}\varphi$ iff  $\forall u\in {{\to}[w]},\M,u\vDash\varphi$;
    \item $\M, w \vDash \mathsf{U}(\varphi, \psi)$ iff there is a finite sequence 
    $w = v_0 \to v_1 \to v_2 \to \dots \to v_n$ ($n \ge 0$) such that $\M, v_n \vDash \varphi$
    and for every $i$ from $0$ to $n-1$, $\M, v_i \vDash \psi$.
    \end{itemize}
    A formula $\varphi$ is valid in $\mathcal{M}$ ($\mathcal{M} \models \varphi$)
    if for all $w \in W$, $\mathcal{M}, w \models \varphi$.
\end{definition}
Since $\to$ is not required to be functional on premodels, one may consider another 
natural semantics for $\mathsf{U}$ that universally quantifies over all $\to$-paths
starting from $w$. 
But premodels only serve an instrumental role in this paper, 
so we will not consider all possibilities. 

Not every DTDS Kripke premodel can be transformed into an interpreted system
satisfying the same formulas, since in DTDS Kripke premodels, 
the temporal $\to$ relation is not functional and thus the ($\mathsf{X}$Func)
axiom is often false. 
The axiom ($\mathsf{U}$Fix) is also important in the transformation. 
The ideal situation is when we have a DTDS Kripke premodel on which the full logic 
$\mathsf{L}_{\mathrm{DTDS}}$ is valid, but we will not have one that also satisfies 
a required consistent formula merely from the canonical model of $\mathsf{L}_{\mathrm{DTDS}}$
since a filtration step is involved. 
Thus, we make sure in the following that we only need the validity of relevant 
instances of ($\mathsf{X}$Func) and ($\mathsf{U}$Fix). 
%
\begin{lemma}\label{lem:selective-unravel}
    Let $\Sigma \subseteq \mathcal{L}_{\mathrm{DTDS}}$ be finite and closed under subformulas, 
    and let 
    $\mathcal{M} = 
        (S, R_\Box, \{R_{[i]}, R_{\mathsf{O}_i}\mid i \in \mathsf{Agt}\}, R_{\mathsf{Agt}}, \to, V)$ 
    be a DTDS Kripke premodel such that
    \begin{itemize}
        \item for any $\mathsf{X}\varphi \in \Sigma$,
            $\mathcal{M} \models \mathsf{X}\varphi \leftrightarrow \lnot\mathsf{X}\lnot\varphi$, and
        \item for any $\mathsf{U}(\alpha, \beta) \in \Sigma$, 
            $\mathcal{M} \models \mathsf{U}(\alpha, \beta) \leftrightarrow 
                (\alpha \lor (\beta \land \mathsf{X}\mathsf{U}(\alpha, \beta)))$.
    \end{itemize}
    Then there is a DTDS interpreted system 
    $\mathcal{I} = (H, R^I_\Box, \{R^I_{[i]}, R^I_{\mathsf{O}_i} \mid i \in \mathsf{Agt}\}, 
    R^I_{\mathsf{Agt}}, V')$ and a surjective function $\pi$ form $H \times \mathbb{N}$ to $S$ 
    such that for any $\varphi \in \Sigma$ and $(h, t) \in H \times \mathbb{N}$, 
    $\mathcal{I}, (h, t) \models \varphi$ iff  $\mathcal{M}, \pi(h, t) \models \varphi$.
\end{lemma}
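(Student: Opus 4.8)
The plan is to obtain $\mathcal{I}$ by a \emph{selective unraveling} of $\mathcal{M}$. Call a function $h\colon\mathbb{N}\to S$ a \emph{path} if $h(t)\to h(t{+}1)$ for all $t$, and call a path $h$ \emph{good} if for every $t$ and every $\mathsf{U}(\alpha,\beta)\in\Sigma$ with $\mathcal{M},h(t)\models\mathsf{U}(\alpha,\beta)$ there is $t'\ge t$ with $\mathcal{M},h(t')\models\alpha$ and $\mathcal{M},h(t'')\models\beta$ for all $t''$ with $t\le t''<t'$. I would let $H$ be the set of all good paths, set $\pi(h,t)=h(t)$, and define the relations on $H\times\mathbb{N}$ by: $(h,t)\,R^I_\Box\,(h',t')$ iff $t=t'$, $h(t)\,R_\Box\,h'(t)$, and $h(t_0)\,R_{\mathsf{Agt}}\,h'(t_0)$ for all $t_0<t$; and, for $O\in\{[i],\mathsf{Agt},\mathsf{O}_i\}$, $R^I_O$ is the restriction of $R^I_\Box$ to the pairs that moreover satisfy $h(t)\,R_O\,h'(t)$ (so $R^I_{\mathsf{Agt}}$ tracks $R_{\mathsf{Agt}}$ of $\mathcal{M}$, not $\bigcap_iR^I_{[i]}$); finally $V'(p)=\{(h,t):h(t)\in V(p)\}$. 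Here I use that $R_{\mathsf{Agt}}$ is an equivalence relation on $S$ (as it is in the premodels we build), so that $R^I_\Box,R^I_{[i]},R^I_{\mathsf{Agt}}$ are equivalence relations.

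Two facts drive everything. From the hypothesis $\mathcal{M}\models\mathsf{U}(\alpha,\beta)\leftrightarrow(\alpha\lor(\beta\land\mathsf{X}\mathsf{U}(\alpha,\beta)))$ one gets \emph{forward propagation}: if $\mathsf{U}(\alpha,\beta)\in\Sigma$, $\mathcal{M},w\models\mathsf{U}(\alpha,\beta)$ and $\mathcal{M},w\not\models\alpha$, then $\mathcal{M},w\models\beta$ and \emph{every} $\to$-successor of $w$ satisfies $\mathsf{U}(\alpha,\beta)$ (this fails on general premodels, which is precisely why this hypothesis is needed). From the hypothesis $\mathcal{M}\models\mathsf{X}\varphi\leftrightarrow\lnot\mathsf{X}\lnot\varphi$ one gets that, for $\mathsf{X}\varphi\in\Sigma$, all $\to$-successors of $w$ agree on $\varphi$, and this common value is whether $\mathcal{M},w\models\mathsf{X}\varphi$. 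The central step is then a \emph{coverage lemma}: for every $(h,t)\in H\times\mathbb{N}$ and every $w'\in R_\Box[h(t)]$ there is $h'\in H$ with $h'(t)=w'$ and $h'(t_0)\,R_{\mathsf{Agt}}\,h(t_0)$ for all $t_0<t$, hence with $(h,t)\,R^I_\Box\,(h',t)$. One builds the past of $h'$ by stepping backwards from $w'$ using (D4*) — from $h(t{-}1)\to h(t)$ and $h(t)\,R_\Box\,w'$, (D4*) supplies $u_{t-1}$ with $h(t{-}1)\,R_{\mathsf{Agt}}\,u_{t-1}\to w'$, and one iterates down to stage $0$ — and takes the future of $h'$ from $w'$ to be any good path starting at $w'$; such a path exists by a round-robin construction (whenever a pending $\mathsf{U}(\alpha,\beta)\in\Sigma$ holds but $\alpha$ fails, the premodel clause for $\mathsf{U}$ yields a $\to$-successor with a strictly shorter witness, and scheduling the finitely many formulas of $\Sigma$ fairly discharges each; otherwise take any successor by seriality of $\to$). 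That $h'$ is itself good follows from forward propagation: an obligation true at an early state of $h'$ is either discharged before $w'$ or propagates unchanged to $w'$, where the good future discharges it. Taking $t=0$ in the round-robin construction also gives surjectivity of $\pi$.

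It then remains to (i) check that $\mathcal{I}$ is a \emph{super-additive} DTDS interpreted system: (D0), (D1), (D3*), (D5)--(D8) transfer pointwise from $\mathcal{M}$; seriality of $R^I_{\mathsf{O}_i}$ and (D2) follow from the coverage lemma together with the corresponding premodel properties; and (D4) is immediate since $(h,t{+}1)\,R^I_\Box\,(h',t{+}1)$ already forces $h(t)\,R_{\mathsf{Agt}}\,h'(t)$; and (ii) prove the truth lemma: for $\varphi\in\Sigma$ and any $(h,t)$, $\mathcal{I},(h,t)\models\varphi$ iff $\mathcal{M},h(t)\models\varphi$, by induction on $\varphi$. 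Booleans are trivial; for $\Box$, $[i]$, $[\mathsf{Agt}]$, $\mathsf{O}_i$ one direction is immediate from the definition of the matching $R^I$ and the converse uses the coverage lemma inside the relevant $R_\Box$-, $R_{[i]}$-, $R_{\mathsf{Agt}}$-, or $R_{\mathsf{O}_i}$-successor set of $h(t)$; for $\mathsf{X}$ one uses the functionality fact above to identify ``$h(t{+}1)$ satisfies $\varphi$'' with ``$\mathcal{M},h(t)\models\mathsf{X}\varphi$''; and for $\mathsf{U}$ one uses goodness of $h$ (definitionally, for the implication from $\mathcal{M}$ to $\mathcal{I}$) and, conversely, that a witnessing index along $h$ gives a witnessing finite $\to$-path in $\mathcal{M}$. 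Since this construction only yields (D3*), I would finish by applying Lemma~\ref{lem:convert-to-additive} and Lemma~\ref{lem:frame-to-model} to replace $\mathcal{I}$ by a genuine DTDS interpreted system satisfying the same formulas, composing its surjective p-morphism with $\pi$ to get the required surjection onto $S$.

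The main obstacle I anticipate is the design of $R^I_\Box$ and the coverage lemma. Using $R_{\mathsf{Agt}}$-agreement at \emph{all} earlier stages — rather than literal equality, which makes the past too rigid to reroute, or mere $R_\Box$-agreement, which does not yield (D4) — is exactly what lets (D4*) simultaneously give (D4) for free and allow every state of each $R_\Box$-class of $\mathcal{M}$ to be reached by some good history; and verifying that rerouting a past preserves goodness is where the ($\mathsf{U}$Fix)-hypothesis, via forward propagation, is genuinely used. Reconciling this STIT-style rerouting with the eventuality bookkeeping for $\mathsf{U}$ is the delicate point; the concluding passage from super-additive to additive is then just a call to the two earlier lemmas.
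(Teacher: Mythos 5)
Your proposal is correct and follows essentially the same route as the paper's own proof: the same set of ``good''/acceptable paths, the same definition of $R^I_\Box$ via $R_{\mathsf{Agt}}$-agreement at all earlier stages, the same backward construction via (D4*) plus forward defect-repair (your ``coverage lemma'' is exactly the paper's verification of (D2)), and the same use of the two validity hypotheses for the $\mathsf{X}$ and $\mathsf{U}$ steps of the truth lemma. Your closing remark that the construction only yields a super-additive system and must be post-composed with Lemmas \ref{lem:convert-to-additive} and \ref{lem:frame-to-model} is apt --- the paper performs that conversion later in the main completeness argument rather than inside this lemma's proof.
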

\begin{proof}
    We call a function $h$ from $\mathbb{N}$ to $S$ an \emph{acceptable path} 
    if 
    \begin{itemize}
        \item for any $t \in \mathbb{N}$, $h(t) \to h(t+1)$, and 
        \item for any $t \in \mathbb{N}$ and any $\mathsf{U}(\alpha, \beta) \in \Sigma$, 
        if $\mathcal{M}, h(t) \models \mathsf{U}(\alpha, \beta)$, 
        then there is $t' \ge t$ such that $\M, h(t') \models \alpha$ and for all $t''$
        from $t$ to $t' - 1$, $\M, h(t'') \models \beta$.
    \end{itemize}
    $\mathcal{I}$ is defined as follows:
    \begin{itemize}
        \item $H$ is the set of all acceptable paths.
        \item $(h, t) R^I_\Box (h', t')$ iff $t = t'$, $h(t) R_\Box h'(t)$, and for all 
        $t'' < t$, $h(t'') R_{\mathsf{Agt}} h'(t'')$.
        \item For any $O \in \{\mathsf{Agt}\} \cup 
                        \{[i], \mathsf{O}_i \mid i \in \mathsf{Agt}\}$,
            $(h, t) R^I_O (h', t')$ iff $(h, t) R^I_\Box (h', t')$ and $h(t) R_O h'(t)$.
        \item $(h, t) \in V^I(p)$ iff $h(t) \in V(p)$.
    \end{itemize}
    Now we show that $\mathcal{I}$ is what we want. 

    First of all, we show that every finite path along $\to$ can be extended 
    to an acceptable path. This is a standard argument in temporal logic.
    $f$ be a finite path along $\to$, namely a function defined on 
    some natural number $m > 0$ 
    such that if $t, t+1 < m$, then $f(t) \to f(t+1)$. 
    Now iteratively extend $f$ by finding the first defect 
    and fix it. There are two kinds of defects: 
    \begin{itemize}
        \item For some $t < m$, there is an `until' formula 
        $\mathsf{U}(\alpha, \beta) \in \Sigma$ such that 
        $\mathcal{M}, f(t) \models \mathsf{U}(\alpha, \beta)$ yet there is no $t'$
        with $t \le t' < m$ such that $\mathcal{M}, f(t') \models \alpha$ and 
        for all $t''$ from $t$ to $t' - 1$, $\mathcal{M}, f(t'') \models \beta$.
        \item $f(m-1)$ has no successor in the path $f$.
        \item Defects are ordered so that 
        all of $t$'s defects are ordered before all of $t+1$'s defects. 
        $t$'s defects internally can be ordered arbitrarily as there are only finitely 
        many of them.
    \end{itemize}
    If the first defect is of the first type with $t$ and formula 
    $\mathsf{U}(\alpha, \beta)$, we construct the extension $f'$ as follows:
    \begin{itemize}
        \item If $f(t)$ is the end of sequence $f$, then by the semantics of 
            DTDS Kripke premodel, we can easily extend $f$ to $f'$ (non-trivially since this is a defect)
            and fix this defect. 
        \item If $f(t)$ is not the last element of $f$ (i.e. if $t < m-1$),
            the we use the assumption that 
            $\mathcal{M} \models \mathsf{U}(\alpha, \beta) \leftrightarrow 
            (\alpha \lor (\beta \land \mathsf{X}\mathsf{U}(\alpha, \beta)))$.
            By induction it is clear that for all $t'$ with $t \le t' < m$, 
            $\mathcal{M}, f(t) \models \beta$, and 
            $\mathcal{M}, f(m-1) \models \mathsf{X}\mathsf{U}(\alpha, \beta)$.
            By the semantics of DTDS Kripke premodel and the requirement that $\to$ is serial, 
            we can easily extend $f$ non-trivially to fix the defect $\mathsf{U}(\alpha, \beta)$ 
            at $t$.
    \end{itemize}
    If the first defect is of the second type, then we can simply use the seriality of $\to$
    to extend $f$ by one step along $\to$ and obtain $f'$. We can then fix the first defect 
    of $f'$ to obtain $f''$ and continue this process. The union of $f, f', f'', \dots$ 
    will be an acceptable path as it has no defects.

    Now we show that $\mathcal{I}$ is a super-additive interpreted system. 
    The only condition that need some special attention is (D2), namely 
    every joint action is executable. 
    Suppose we have some $t \in \mathbb{N}$ and $h$ and $h^i$ for each $i \in \mathsf{Agt}$ 
    all in $H$ so that $(h, t) R^I_\Box (h^i, t)$ for all $i \in \mathsf{Agt}$. 
    Since $\M$ is a DTDS Kripke premodel, it satisfy its own (D2). 
    Thus we obtain a state $s \in R_\Box[h(t)]$ such that 
    for any $i \in \mathsf{Agt}$, $h^i(t) R_{[i]} s$. 
    Now we need to build an acceptable path $h'$ around $s$. 
    First we extend $s$ to the past to obtain a finite sequence $f$ of length $t + 1$
    so that $f(t) = s$ and for any $t' < t$, $f(t') R_{\mathsf{Agt}} h(t')$. 
    This can be done by repeatedly using (D4*) on $\mathcal{M}$. 
    Suppose $f(t'+1)$ has been defined so that $f(t'+1) R_\Box h(t'+1)$, 
    since $h(t') \to h(t' + 1)$, by (D4*) we obtain a $w \in S$ 
    such that $w R_{\mathsf{Agt}} h(t')$ and $w \to f(t'+1)$. 
    Let $f(t')$ be this $w$. Since $R_\mathsf{Agt} \subseteq R_\Box$, 
    if $t'$ is still greater than $0$, we can continue this process and find $f(t'-1)$.
    Then $f$ can be extended to an acceptable history $h' \in H$. 
    By construction $(h', t) R^I_{\Box} (h, t)$ and also every $(h^i, t)$, and $h'(t) = s$.
    Thus $(h', t) R^I_{[i]} (h^i, t)$ for all $i \in \mathsf{Agt}$, and we verified (D2).

    Finally, by an induction on formulas in $\Sigma$, we can show that 
    for any $\varphi \in \Sigma$, $\mathcal{I}, (h, t) \models \varphi$ iff
    $\mathcal{M}, h(t) \models \varphi$.
    Then only interesting parts are the induction steps for $\mathsf{X}$ and 
    $\mathsf{U}$. 
    \begin{itemize}
        \item For the $\mathsf{X}$ case, if $\mathcal{M}, h(t) \models \mathsf{X}\varphi$, 
        then $\mathcal{I}, (h, t) \models \mathsf{X}\varphi$ as well 
        since $\mathcal{M}, h(t+1) \models \varphi$ by semantics of $\mathsf{X}$ on $\mathcal{M}$ 
        and by induction hypothesis $\mathcal{I}, (h, t+1) \models \varphi$. 
        Conversely, suppose $\mathcal{M}, h(t) \not\models \mathsf{X}\varphi$. 
        We have assumed that 
        $\mathcal{M} \models \mathsf{X}\varphi \leftrightarrow \lnot\mathsf{X}\lnot\varphi$.
        Thus $\mathcal{M}, h(t) \models \mathsf{X} \lnot\varphi$, and by semantics 
        $\mathcal{M}, h(t + 1) \not\models \varphi$. 
        By inductive hypothesis, $\mathcal{I}, (h, t+1) \not\models \varphi$, 
        and by semantics, $\mathcal{I}, (h, t) \not\models \mathsf{X}\varphi$.
        \item For the $\mathsf{U}$ case, if $\mathcal{M}, h(t) \models \mathsf{U}(\alpha, \beta)$, 
        then recall that $h$ is an acceptable path, and by definition and inductive hypothesis 
        on $\alpha$ and $\beta$, $\mathcal{I}, (h, t) \models \mathsf{U}(\alpha, \beta)$. 
        Conversely, if $\mathcal{I}, (h, t) \models \mathsf{U}(\alpha, \beta)$, 
        then the semantics on interpreted system provides means that for some $t' \ge t$, 
        the sequence $h(t) \to h(t+1) \to \dots \to h(t')$ witness 
        $\mathcal{M}, h(t) \models \mathsf{U}(\alpha, \beta)$ 
        (again assuming inductive hypothesis on $\alpha$ and $\beta$).
    \end{itemize}
    This means we can take $\pi$ to be the function that map $(h, t)$ to $h(t)$. 
    It is surjective since for any $s \in S$, it can be extended to an acceptable path, 
    meaning that there is $h \in H$ such that $\pi((h, 0)) = h(0) = s$.
    This concludes the proof.
\end{proof}

\section{Canonical model, filtration, and the completeness proof}\label{sec:completeness}
In this section we complete the proof of completeness by taking a suitable filtration
of the canonical model of $\mathsf{L}_{\mathrm{DTDS}}$, and note that it satisfies the 
requirement of Lemma \ref{lem:selective-unravel}. 

\subsection{Canonical model}
\begin{definition}
    Let $\mathcal{M}^c$ be the canonical model of $\mathsf{L}_{\mathrm{DTDS}}$. 
    That is, 
    $\mathcal{M}^c = 
        (S^c, R^c_\Box, \{R^c_{[i]}, R^c_{\mathsf{O}_i} \mid \in \mathsf{Agt}\}, 
        R^c_{\mathsf{Agt}}, \to^c, V^c)$
    where 
    \begin{itemize}
        \item $S^c$ is the set of all $\mathsf{L}_{\mathrm{DTDS}}$-maximally consistent sets (MCSs);
        for later notational convenience, we use lower case letters `u', `v', `w', `x', `y', and `z'
        to denote MCSs;
        \item for any $O \in \{\Box, \mathsf{Agt}\} \cup \{[i], \mathsf{O}_i \mid i \in \mathsf{Agt}\}$,
        $w R^c_O v$ iff whenever $O\varphi \in w$, $\varphi \in v$;
        \item $w \to^c v$ iff whenever $\mathsf{X}\varphi \in w$, $\varphi \in v$;
        \item $w \in V^c(p)$ iff $p \in w$.
    \end{itemize}
\end{definition}
\noindent The following follows from standard general results for normal propositional modal logic
and Sahlqvist correspondence theory. 
\begin{lemma}
    If $\varphi$ is consistent, then $\varphi$ is in an MCS.
\end{lemma}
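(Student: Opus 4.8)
The plan is to run the standard Lindenbaum construction. Since $\mathsf{Prop}$ is countably infinite and $\mathsf{Agt}$ is finite, the language $\mathcal{L}_{\mathrm{DTDS}}$ is countable, so I would fix an enumeration $\psi_0, \psi_1, \psi_2, \dots$ of all its formulas. Write $\vdash\chi$ for $\chi \in \mathsf{L}_{\mathrm{DTDS}}$, and call a set $\Gamma \subseteq \mathcal{L}_{\mathrm{DTDS}}$ \emph{consistent} when there is no finite $\{\gamma_1, \dots, \gamma_k\} \subseteq \Gamma$ with $\vdash \lnot(\gamma_1 \land \dots \land \gamma_k)$; since (PC) and (MP) are available this behaves exactly like propositional consistency, and ``$\varphi$ is consistent'' is precisely the statement that $\{\varphi\}$ is consistent in this sense.

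First I would build an increasing chain $\Gamma_0 \subseteq \Gamma_1 \subseteq \cdots$ of consistent sets by setting $\Gamma_0 = \{\varphi\}$ and, given $\Gamma_n$, letting $\Gamma_{n+1} = \Gamma_n \cup \{\psi_n\}$ if that set is consistent and $\Gamma_{n+1} = \Gamma_n \cup \{\lnot\psi_n\}$ otherwise. A routine propositional argument (using only (PC) and (MP)) shows that whenever $\Gamma_n$ is consistent, at least one of $\Gamma_n \cup \{\psi_n\}$ and $\Gamma_n \cup \{\lnot\psi_n\}$ is consistent, so by induction every $\Gamma_n$ is consistent. Then I would set $\Gamma = \bigcup_{n \in \mathbb{N}} \Gamma_n$.

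It then remains to check that $\Gamma$ is a maximally consistent set containing $\varphi$. Membership of $\varphi$ is immediate from $\Gamma_0 \subseteq \Gamma$. Consistency of $\Gamma$ follows from the finitary character of derivations: any witness to the inconsistency of $\Gamma$ uses only finitely many of its members, hence lies inside some $\Gamma_n$ and would already witness inconsistency of that $\Gamma_n$, a contradiction. Maximality — that for each $\psi$ exactly one of $\psi, \lnot\psi$ lies in $\Gamma$ — holds because $\psi = \psi_n$ for some $n$ and one of $\psi_n, \lnot\psi_n$ was placed in $\Gamma_{n+1}$, while consistency forbids both; the remaining usual closure properties of MCSs (deductive closure, the standard clauses for $\land$ and $\lnot$) follow from (PC) and (MP) in the familiar way.

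I do not expect any genuine obstacle here; the one point worth flagging is purely a matter of setup, namely that one must use the standard modal notion of derivability from premises in which the necessitation rule (Nec) and the induction rule ($\mathsf{U}$Ind) are applied only to theorems, never across hypotheses, so that the deduction-style reasoning in the second paragraph (``one of the two extensions is consistent'') is carried out entirely at the propositional level. With that convention the proof is exactly the classical Lindenbaum lemma, and in particular none of the STIT- or temporal-specific axioms or the Sahlqvist machinery is needed for this step.
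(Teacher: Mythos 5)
Your proof is correct and matches the paper's intent exactly: the paper gives no explicit argument here, simply appealing to ``standard general results for normal propositional modal logic,'' and what you have written out is precisely that standard Lindenbaum construction, including the right caveat about keeping (Nec) and ($\mathsf{U}$Ind) as rules on theorems so that consistency is a purely propositional notion. Nothing further is needed.
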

\begin{lemma}
    For any non-temporal operator $O$ and any MCS $w$, if $\lnot O\varphi \in w$
    then there is an MCS $v$ such that 
    $w R^c_O v$ and $\lnot\varphi \in v$. 
    Similarly, if $\lnot \mathsf{X} \varphi \in w$, then there is $v$
    such that $w \to^c v$ and $\lnot\varphi \in v$. 
\end{lemma}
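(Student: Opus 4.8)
The plan is to run the standard existence-lemma argument for normal modal operators, which applies uniformly because each operator $O \in \{\Box, [\mathsf{Agt}]\} \cup \{[i], \mathsf{O}_i \mid i \in \mathsf{Agt}\}$, as well as $\mathsf{X}$, is a \emph{normal} modal operator in $\mathsf{L}_{\mathrm{DTDS}}$: its $\mathsf{K}$ axiom is among the schemata (via $(\mathsf{S5}_{\Box[i][\mathsf{Agt}]})$ and $(\mathsf{K}_{\mathsf{O}_i\mathsf{X}})$), and it is governed by the $(\mathsf{Nec})$ rule. For the first part, fix such an $O$ and an MCS $w$ with $\lnot O\varphi \in w$. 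Consider the set
\[
    \Gamma = \{\psi \mid O\psi \in w\} \cup \{\lnot\varphi\}.
\]
I would first show $\Gamma$ is $\mathsf{L}_{\mathrm{DTDS}}$-consistent. Suppose not; then there are finitely many $\psi_1, \dots, \psi_k$ with each $O\psi_j \in w$ such that $\mathsf{L}_{\mathrm{DTDS}} \vdash (\psi_1 \land \dots \land \psi_k) \to \varphi$. Applying $(\mathsf{Nec})$ for $O$ and the $\mathsf{K}$ axiom for $O$ repeatedly (together with propositional reasoning) yields $\mathsf{L}_{\mathrm{DTDS}} \vdash (O\psi_1 \land \dots \land O\psi_k) \to O\varphi$. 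Since $w$ is an MCS containing each $O\psi_j$, it is closed under modus ponens and hence $O\varphi \in w$, contradicting $\lnot O\varphi \in w$ (which forces $O\varphi \notin w$ by maximal consistency).

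Having established consistency, I would apply the Lindenbaum lemma (stated just above as ``if $\varphi$ is consistent, then $\varphi$ is in an MCS'', in its evident set-extension form) to extend $\Gamma$ to an MCS $v$. By construction $\lnot\varphi \in v$. Moreover, whenever $O\psi \in w$ we have $\psi \in \Gamma \subseteq v$, which is precisely the defining condition for $w R^c_O v$. This completes the first part.

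For the $\mathsf{X}$ case the argument is literally the same, with $O$ replaced by $\mathsf{X}$: from $\lnot\mathsf{X}\varphi \in w$ one forms $\Gamma = \{\psi \mid \mathsf{X}\psi \in w\} \cup \{\lnot\varphi\}$, uses the $\mathsf{K}$ axiom for $\mathsf{X}$ and $(\mathsf{Nec})$ for $\mathsf{X}$ to derive consistency exactly as above, extends to an MCS $v$ via Lindenbaum, and reads off $w \to^c v$ and $\lnot\varphi \in v$ from the definition of $\to^c$. There is no real obstacle here — the only point requiring any care is confirming that the relevant $\mathsf{K}$-axiom and necessitation rule are indeed available for \emph{every} operator named in the statement, which they are by the definition of $\mathsf{L}_{\mathrm{DTDS}}$; once that is noted, the proof is entirely routine and identical across all cases, so I would present it once generically and remark that the $\mathsf{X}$ case is handled mutatis mutandis.
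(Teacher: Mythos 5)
Your proof is correct and is exactly the standard existence-lemma argument that the paper invokes when it says the lemma ``follows from standard general results for normal propositional modal logic''; the paper simply omits the details. Nothing further is needed.
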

\begin{lemma}
    $\mathcal{M}^c$ is a DTDS Kripke premodel and moreover $\to^c$ is a function.
\end{lemma}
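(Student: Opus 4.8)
The plan is to verify each clause in the definition of a DTDS Kripke premodel for $\mathcal{M}^c$ by the standard canonical‑model recipe, the only non‑automatic points being that premodels ask for (D4*) in place of (D4) and that the functionality of $\to^c$ must be read off from ($\mathsf{X}$Func). To begin, $S^c\neq\varnothing$ because $\mathsf{L}_{\mathrm{DTDS}}$ is consistent, and $V^c$ is trivially a valuation. The $\mathsf{S5}$ axioms for $\Box$, $[i]$ and $[\mathsf{Agt}]$ make $R^c_\Box$, $R^c_{[i]}$ and $R^c_{\mathsf{Agt}}$ equivalence relations, and the $\mathsf{K}$ axioms make $R^c_{\mathsf{O}_i}$ a binary relation as required. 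Since (A1)--(A8) are all Sahlqvist formulas they are canonical, so each forces its first‑order frame correspondent to hold of $\mathcal{M}^c$: (A1) gives (D1), (A2) gives (D2), (A3) gives (D3*), (A5) gives (D5), (A6) (the $\mathsf{D}$ axiom for $\mathsf{O}_i$) gives the seriality (D6), (A7) gives (D7), and (A8) gives (D8). I would note that the correspondent one extracts from (A2) is the independence statement ``whenever $w\,R^c_\Box\,v_i$ for every $i\in\mathsf{Agt}$, there is $u$ with $v_i\,R^c_{[i]}\,u$ for every $i$'', which, given (D1), is equivalent to the formulation of (D2) in Definition~\ref{def:interpreted-system}, since every $f\in\Pi_{i\in\mathsf{Agt}}||R^c_{[i]}||_M$ arises from a tuple of representatives chosen inside the $R^c_\Box$‑class $M$.

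The one correspondence I would rederive by hand, since the canonical frame has no built‑in $\mathbb{N}$‑indexing, is that of (A4) $= [\mathsf{Agt}]\mathsf{X}\varphi\to\mathsf{X}\Box\varphi$: the minimal‑valuation computation pins the antecedent's $\varphi$ to $(R^c_{\mathsf{Agt}}\circ{\to^c})[w]$ and the consequent then demands $({\to^c}\circ R^c_\Box)[w]\subseteq(R^c_{\mathsf{Agt}}\circ{\to^c})[w]$, which is exactly (D4*). Functionality of $\to^c$ I would obtain directly from ($\mathsf{X}$Func): $\to^c$ is serial because $\{\chi\mid\mathsf{X}\chi\in w\}$ is consistent for every MCS $w$ (otherwise $\mathsf{X}\bot\in w$ by (Nec) and $\mathsf{K}$ for $\mathsf{X}$, contradicting the theorem $\lnot\mathsf{X}\bot$, which follows from ($\mathsf{X}$Func) and (Nec)), so $w$ has a $\to^c$‑successor; and $\to^c$ has at most one successor because if $w\to^c v$, $w\to^c v'$ and $\psi\in v$, then $\mathsf{X}\lnot\psi\notin w$, hence $\lnot\mathsf{X}\lnot\psi\in w$, hence $\mathsf{X}\psi\in w$ by ($\mathsf{X}$Func), hence $\psi\in v'$, so $v=v'$ by maximality.

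I expect the real work to lie in the two ``existential'' conditions (D2) and (D4*), which require building a fresh MCS and applying an axiom somewhat indirectly; although Sahlqvist theory covers them, I would spell them out. For (D2), given $w\,R^c_\Box\,v_i$ for each $i$, I would show $\Gamma_0=\bigcup_{i\in\mathsf{Agt}}\{\varphi\mid[i]\varphi\in v_i\}$ is consistent: an inconsistency, after grouping the offending conjuncts from each $v_i$ into a single $\psi_i$ via $\mathsf{K}$ for $[i]$, gives $[i]\psi_i\in v_i$ and a theorem $\bigwedge_i\psi_i\to\bot$; then $\Diamond[i]\psi_i\in w$ (as $w\,R^c_\Box\,v_i$ and $[i]\psi_i\in v_i$), so (A2) gives $\Diamond\bigwedge_i[i]\psi_i\in w$, whence some MCS contains $\bigwedge_i[i]\psi_i$ and hence $\bigwedge_i\psi_i$ by the $\mathsf{T}$ axioms, contradicting $\bigwedge_i\psi_i\to\bot$. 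Any MCS $u\supseteq\Gamma_0$ then satisfies $v_i\,R^c_{[i]}\,u$ for all $i$, providing the required common successor.

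For (D4*), given $w\to^c v$ and $v\,R^c_\Box\,v'$, I would show $\Gamma_1=\{\chi\mid[\mathsf{Agt}]\chi\in w\}\cup\{\mathsf{X}\chi\mid\chi\in v'\}$ is consistent: an inconsistency produces, for suitable $[\mathsf{Agt}]\alpha\in w$ and $\beta\in v'$, a theorem $\alpha\to\lnot\mathsf{X}\beta$, hence $\alpha\to\mathsf{X}\lnot\beta$ by ($\mathsf{X}$Func), hence $[\mathsf{Agt}]\alpha\to[\mathsf{Agt}]\mathsf{X}\lnot\beta\to\mathsf{X}\Box\lnot\beta$ by (Nec), $\mathsf{K}$ and (A4); so $\mathsf{X}\Box\lnot\beta\in w$, hence $\Box\lnot\beta\in v$ (as $w\to^c v$), hence $\lnot\beta\in v'$ (as $v\,R^c_\Box\,v'$), contradicting $\beta\in v'$. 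Extending $\Gamma_1$ to an MCS $u$ gives $w\,R^c_{\mathsf{Agt}}\,u$, and $u\to^c v'$: if $\mathsf{X}\chi\in u$ but $\chi\notin v'$, then $\lnot\chi\in v'$, so $\mathsf{X}\lnot\chi\in u$, so $\lnot\mathsf{X}\chi\in u$ by ($\mathsf{X}$Func), a contradiction. This yields all of (D1), (D2), (D3*), (D4*), (D5)--(D8) together with the remaining structural requirements, and $\to^c$ is a total function. The main obstacle, as is usual for this family of logics, is the independence‑of‑agents clause (D2) via (A2), with (D4*) being the step where the temporal and non‑temporal modalities must be made to interact.
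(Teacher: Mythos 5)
Your proof is correct and follows exactly the route the paper intends: the paper gives no explicit proof, asserting only that the lemma ``follows from standard general results for normal propositional modal logic and Sahlqvist correspondence theory,'' and your argument is precisely the detailed working-out of that claim (Sahlqvist canonicity for (A1)--(A8), hand-verification of the existential conditions (D2) and (D4*) via consistency of the sets $\Gamma_0$ and $\Gamma_1$, and functionality of $\to^c$ from ($\mathsf{X}$Func)). No gaps.
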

The problem with $\mathcal{M}^c$ is that if we use the semantics for 
DTDS Kripke premodel, then the standard truth lemma fails. 
There can be an MCS $w$ containing $\mathsf{U}(p, \top)$, but 
every $v$ reachable from $w$ by $\to^c$ does not have $p$. 
To fix this, we take a filtration.

\subsection{Filtration}
We want to filtrate $\mathcal{M}^c$ through a finite set $\Sigma$ of formulas 
to a DTDS Kripke premodel $\mathcal{M}^f$ where we can prove a truth lemma. 
For this to work, $\Sigma$ must have additional closure properties other than
closure under subformulas, and the filtration itself must be done more finely.
\begin{definition}
    A set $\Sigma \subseteq \mathcal{L}_{\mathrm{DTDS}}$ is \emph{filtration-ready} if 
    \begin{itemize}
        \item $\Sigma$ is closed under subformulas;
        \item whenever $\varphi \in \Sigma$, $\dot\lnot\varphi \in \Sigma$, 
            where $\dot\lnot$ is the no-redundancy negation:
            $\dot\lnot\lnot\psi = \psi$ and $\dot\lnot\psi = \lnot\psi$ 
            if $\psi$ does not start with negation;
        \item whenever $\mathsf{U}(\alpha, \beta) \in \Sigma$ 
        then $\mathsf{X}\mathsf{U}(\alpha, \beta) \in \Sigma$;
        \item whenever $\mathsf{O}_i\varphi \in \Sigma$
        then $[i]\varphi \in \Sigma$.
    \end{itemize}
\end{definition}
For the rest of this section, let us fix a finite filtration-ready $\Sigma$. 
Now we define the filtration of $\mathcal{M}^c$ through $\Sigma$. 
\begin{definition}\label{def:filtration}
    Let $\mathcal{M}^f$, the filtration of the canonical model $\mathcal{M}^c$ 
    through $\Sigma$, be $(S^f, R^f_\Box, \{R^f_{[i]}, R^f_{\mathsf{O}_i} \mid \in \mathsf{Agt}\}, 
        R^f_{\mathsf{Agt}}, \to^f, V^f)$, defined as follows.
    \begin{itemize}
        \item First, for any MCS $w \in S^c$, let $\Sigma(w) = w \cap \Sigma$. 
        Then, define an equivalence relation $\sim$ on $S^c$ by 
        \[w \sim v \Leftrightarrow \Sigma(w) = \Sigma(v) \text{ and } 
            \{\Sigma(x) \mid w R^c_\Box x\} = \{\Sigma(x) \mid v R^c_\Box x\}.\]
        For any $w \in S^c$, let $|w|$ be the $\sim$-equivalence class $w$ is in. 
        $S^f$ is defined as the set of equivalence classes of $\sim$ 
        ($||{\sim}||$ in our notation).
        \item For any non-temporal operator $O$, let $R^e_O$ be the 
        `existential lifting' of $R^c_O$ to $S^f$ defined by $C R^e_O D$ iff
        there is $w \in C$ and $v \in D$ such that $w R^c_O v$.
        \item Let $R^f_\Box$ simply by $R^e_\Box$.
        \item Let $\to^f$ also be the `existential lifting' of $\to^c$. 
            That is, $C \to^f D$ iff there is $w \in C$, $v \in D$ such that $w \to^c v$. 
        \item Let $R^f_{\mathsf{Agt}}$ be the transitive closure of $R^e_{\mathsf{Agt}}$. 
            Also, for any $i \in \mathsf{Agt}$, let $R^f_{[i]}$ be the transitive closure of 
            $R^e_{[i]}$.
        \item For any $i \in \mathsf{Agt}$
            let $R^f_{\mathsf{O}_i}$ be $R^e_{\mathsf{O}_i} \circ R^f_{[i]}$
        \item For any $p \in \mathsf{Prop} \cap \Sigma$, $V^f(p) = \{|w| \mid p \in w\}$.
        $V^f(p) = \varnothing$ for other $p$ not in $\Sigma$. 
    \end{itemize}
\end{definition}

Now we start to verify that $\mathcal{M}^f$ is a DTDS Kripke premodel and satisfies 
the further requirements for Lemma \ref{lem:selective-unravel}.
\begin{lemma}\label{lem:commute}
   ${\sim} \circ {R^c_\Box} = {R^c_\Box} \circ {\sim}$. 
\end{lemma}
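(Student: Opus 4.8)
The plan is to establish both inclusions, although only one direction requires real work: since $R^c_\Box$ is an equivalence relation and $\sim$ is clearly symmetric, the converse of ${\sim} \circ {R^c_\Box}$ is exactly ${R^c_\Box} \circ {\sim}$, so once one inclusion is proved, taking converses of both sides yields the other. So I would focus on ${\sim} \circ {R^c_\Box} \subseteq {R^c_\Box} \circ {\sim}$.

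For that inclusion, suppose $w \, ({\sim} \circ {R^c_\Box}) \, v$, witnessed by some MCS $u$ with $w \sim u$ and $u R^c_\Box v$. The crucial point is that $R^c_\Box$ is an equivalence relation, so $u R^c_\Box v$ gives $R^c_\Box[u] = R^c_\Box[v]$, hence $\{\Sigma(x) \mid u R^c_\Box x\} = \{\Sigma(x) \mid v R^c_\Box x\}$. Combining this with the two clauses packaged into $w \sim u$ — namely $\Sigma(w) = \Sigma(u)$ and $\{\Sigma(x) \mid w R^c_\Box x\} = \{\Sigma(x) \mid u R^c_\Box x\}$ — we obtain $\{\Sigma(x) \mid w R^c_\Box x\} = \{\Sigma(x) \mid v R^c_\Box x\}$. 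In particular $\Sigma(v)$, which lies in the right-hand side by reflexivity of $R^c_\Box$, lies in the left-hand side, so there is an MCS $u'$ with $w R^c_\Box u'$ and $\Sigma(u') = \Sigma(v)$.

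It remains to check $u' \sim v$. We already have $\Sigma(u') = \Sigma(v)$, the first clause. For the second, use once more that $R^c_\Box$ is an equivalence relation: $w R^c_\Box u'$ gives $R^c_\Box[u'] = R^c_\Box[w]$, so $\{\Sigma(x) \mid u' R^c_\Box x\} = \{\Sigma(x) \mid w R^c_\Box x\}$, and the equalities obtained above show this is $\{\Sigma(x) \mid v R^c_\Box x\}$. Hence $u' \sim v$, and the path $w \mathrel{R^c_\Box} u' \mathrel{\sim} v$ witnesses $w \, ({R^c_\Box} \circ {\sim}) \, v$. The reverse inclusion follows by the converse argument noted above. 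There is no genuine obstacle here: the argument is a short diagram chase whose only real input is that $R^c_\Box$ is an equivalence relation, so that $R^c_\Box$-related MCSs reach exactly the same $\Sigma$-types. The one place to stay careful is bookkeeping the two conjuncts in the definition of $\sim$ — the identity of $\Sigma(w)$ itself versus the set of $\Sigma$-types reachable from $w$ — and not sliding between them.
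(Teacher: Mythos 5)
Your proof is correct and follows essentially the same route as the paper's: reduce to one inclusion via symmetry of both relations, then use the second clause of $\sim$ together with the fact that $R^c_\Box$-equivalent worlds see the same set of $\Sigma$-types to produce the witness $u'$ and verify $u' \sim v$. Your write-up is in fact slightly cleaner, since the paper's own proof contains a small typo ($\Sigma(z')=\Sigma(y)$ where it should be $\Sigma(z')=\Sigma(z)$) that your version implicitly corrects.
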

\begin{proof}
    It is enough to show only one direction, since the other follows by 
    taking the converse and using the fact that both $\sim$ and $R^c_\Box$ are symmetric. 
    Suppose $x \sim y R^c_\Box z$. 
    By the second clause of the definition of $\sim$, there exists a $z'$ such that  
    $xR^c_\Box z'$ and $\Sigma(z')=\Sigma(y)$. We claim that $z'\sim z$. 
    Since $R^c_\Box$ is an equivalence relation,  
    $\{w \mid xR^c_\Box w\} = \{w \mid z'R^c_\Box w\}$ and 
    $\{w \mid zR^c_\Box w\} = \{w \mid yR^c_\Box w\}$. 
    From $x\sim y$, we infer that $\{\Sigma(w)\mid xR_\Box w\}=\{\Sigma(w)\mid yR_\Box w\}$. 
    Therefore, $\{\Sigma(w) \mid zR_\Box w\} = \{\Sigma(w) \mid z'R_\Box w\}$. 
    Then by the definition of $\sim$, $z'\sim z$.
\end{proof}

\begin{proposition}\label{1}
    $R^f_\Box$ is an equivalence relation. 
\end{proposition}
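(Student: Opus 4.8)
The plan is to show that $R^f_\Box = R^e_\Box$ inherits reflexivity, symmetry, and transitivity directly from the fact that $R^c_\Box$ is an equivalence relation on the canonical model, using the commutation lemma (Lemma~\ref{lem:commute}) as the key tool.

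\textbf{Reflexivity.} For any equivalence class $C = |w| \in S^f$, since $R^c_\Box$ is reflexive we have $w R^c_\Box w$ with $w \in C$, so $C R^e_\Box C$ by definition of the existential lifting; hence $R^f_\Box$ is reflexive.

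\textbf{Symmetry.} Suppose $C R^f_\Box D$, so there are $w \in C$ and $v \in D$ with $w R^c_\Box v$. Since $R^c_\Box$ is symmetric, $v R^c_\Box w$, and now $v \in D$, $w \in C$ witness $D R^e_\Box C$; hence $R^f_\Box$ is symmetric.

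\textbf{Transitivity.} Suppose $C R^f_\Box D$ and $D R^f_\Box E$. Then there are $w \in C$, $v \in D$ with $w R^c_\Box v$, and $v' \in D$, $u \in E$ with $v' R^c_\Box u$. Since $v, v' \in D$, we have $v \sim v'$. From $v' R^c_\Box u$ and $v \sim v'$, Lemma~\ref{lem:commute} (applied in the form ${\sim} \circ {R^c_\Box} \subseteq {R^c_\Box} \circ {\sim}$, reading $v \sim v' R^c_\Box u$) gives some $u'$ with $v R^c_\Box u'$ and $u' \sim u$, i.e.\ $u' \in E$. Then $w R^c_\Box v R^c_\Box u'$, and since $R^c_\Box$ is transitive, $w R^c_\Box u'$ with $w \in C$ and $u' \in E$, so $C R^e_\Box E = C R^f_\Box E$. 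Hence $R^f_\Box$ is transitive.

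The only step requiring any thought is transitivity, where the subtlety is that the two witnessing pairs need not share a common representative in $D$; this is precisely what the commutation lemma is designed to bridge, by letting us push the $\sim$-discrepancy in $D$ through the $R^c_\Box$-step so that it reappears harmlessly inside the class $E$. Everything else is immediate from the definition of the existential lifting and the corresponding property of $R^c_\Box$.
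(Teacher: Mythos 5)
Your proof is correct and follows essentially the same route as the paper: reflexivity and symmetry are immediate from the existential lifting, and transitivity is handled by using Lemma~\ref{lem:commute} to replace the mismatched middle witnesses ($v$ and $v'$ in your notation, $u_1$ and $u_2$ in the paper's) with a single $R^c_\Box$-chain into the target class. No gaps.
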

\begin{proof}
   The reflexivity and symmetry of $R^f_\Box$ are easy from the corresponding properties of $R^c_\Box$. 
   To show that $R^f_\Box$ is transitive, suppose $C_1 R^f_\Box C_2 R^f_\Box C_3$. 
   This means there are $w \in C_1$, $u_1, u_2 \in C_2$, and $v \in C_3$ such that 
   $w R^c_\Box u_1$ and $u_2 R^c_\Box v$.
   So $u_1 \sim u_2 R^c_\Box v$.
   Using Lemma \ref{lem:commute},
   there is $v' \in C_3$ such that $u_1 R^c_\Box v'$. 
   Then $w R^c_\Box u_1 R^c_\Box v'$, and since $R^c_\Box$ is transitive, 
   $w R^c_\Box v'$. Thus $C_1 R^f_\Box C_3$.
\end{proof}
\begin{proposition}
    For any STIT operator $O$, $R^f_O$ is an equivalence relation.
\end{proposition}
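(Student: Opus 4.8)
The plan is to exploit the shape of the definition directly: $R^f_O$ is, by Definition \ref{def:filtration}, the transitive closure of the existential lifting $R^e_O$ (here $O$ ranges over $\{[\mathsf{Agt}]\} \cup \{[i] \mid i \in \mathsf{Agt}\}$), so transitivity is free. It therefore suffices to check that $R^e_O$ is reflexive and symmetric, and that passing to the transitive closure preserves these two properties.

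First I would recall that for every STIT operator $O$ the canonical relation $R^c_O$ is an equivalence relation on $S^c$, since $\mathsf{L}_{\mathrm{DTDS}}$ contains the $\mathsf{S5}$ axioms for $\Box$, $[i]$, and $[\mathsf{Agt}]$ (the schema ($\mathsf{S5}_{\Box[i][\mathsf{Agt}]}$)) and $R^c_O$ is canonical for these. Reflexivity of $R^e_O$ then follows: given a class $C \in S^f$, pick any $w \in C$; from $w\, R^c_O\, w$ we get $C\, R^e_O\, C$ by the definition of the existential lifting. Symmetry of $R^e_O$ is equally direct: if $C\, R^e_O\, D$ is witnessed by $w \in C$ and $v \in D$ with $w\, R^c_O\, v$, then $v\, R^c_O\, w$ by symmetry of $R^c_O$, so $D\, R^e_O\, C$.

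Finally, the transitive closure of a reflexive relation is reflexive, since it extends the original relation; and the transitive closure of a symmetric relation is symmetric, since any finite chain $C = C_0\, R^e_O\, C_1\, R^e_O\cdots R^e_O\, C_n = D$ reverses, link by link using symmetry of $R^e_O$, to a chain from $D$ back to $C$. Hence $R^f_O$, being the transitive closure of the reflexive and symmetric relation $R^e_O$, is an equivalence relation. There is no real obstacle here; the only point worth flagging is that $R^e_O$ itself need \emph{not} be transitive — two links $w\, R^c_O\, v$ and $v'\, R^c_O\, u$ with $v \sim v'$ but $v \neq v'$ need not compose into $w\, R^c_O\, u$ — which is precisely why Definition \ref{def:filtration} takes the transitive closure, and why the argument must be phrased as ``closure preserves reflexivity and symmetry'' rather than as a direct inheritance of all three equivalence properties.
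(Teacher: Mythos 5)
Your proof is correct and follows essentially the same route as the paper's: observe that $R^e_O$ is reflexive and symmetric (inherited from the canonical $\mathsf{S5}$ relation $R^c_O$), and that taking the transitive closure preserves these properties while supplying transitivity. Your version merely spells out the details that the paper leaves as ``clear.''
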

\begin{proof}
    It is clear that $R^e_O$ is reflexive and symmetric. 
    Since $R^f_O$ is the transitive closure of $R^e_O$, $R^f_O$ is 
    an equivalence relation.
\end{proof}
\begin{proposition}\label{22}
    $\mathcal{M}^f$ satisfies (D1), (D2), and (D3*).
\end{proposition}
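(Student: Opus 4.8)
The plan is to derive all three conditions for $\mathcal{M}^f$ from the fact that the canonical model $\mathcal{M}^c$ is a DTDS Kripke premodel (hence satisfies (D1), (D2), and (D3*)), together with two structural features of the filtration already in hand: that $R^f_\Box = R^e_\Box$ is an equivalence relation (Proposition \ref{1}), and that each $R^f_{[i]}$ (resp.\ $R^f_{\mathsf{Agt}}$) is the transitive closure of $R^e_{[i]}$ (resp.\ $R^e_{\mathsf{Agt}}$).

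For (D1) and (D3*) I expect only routine work. Since $\mathcal{M}^c$ satisfies (D1), $R^c_{[i]}\subseteq R^c_\Box$, so the existential lifting gives $R^e_{[i]}\subseteq R^e_\Box=R^f_\Box$; because $R^f_\Box$ is transitive it contains the transitive closure $R^f_{[i]}$ of $R^e_{[i]}$, which is (D1). Likewise, since $\mathcal{M}^c$ satisfies (D3*), $R^c_{\mathsf{Agt}}\subseteq R^c_{[i]}$, hence $R^e_{\mathsf{Agt}}\subseteq R^e_{[i]}$, and since transitive closure is monotone, $R^f_{\mathsf{Agt}}\subseteq R^f_{[i]}$ for every $i$; intersecting over $\mathsf{Agt}$ yields (D3*).

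The main obstacle is (D2), precisely because $R^f_{[i]}$ is a transitive closure and its equivalence classes can stretch over arbitrarily many $R^e_{[i]}$-steps, so a naive argument will not close. The key lemma I would isolate first is a ``totality'' consequence of Lemma \ref{lem:commute}: if $C\,R^e_\Box\,D$ in $S^f$, then for every $v\in D$ there is $w\in C$ with $w\,R^c_\Box\,v$ (and symmetrically). Indeed, some $w_0\in C$ and $v_0\in D$ satisfy $w_0\,R^c_\Box\,v_0$, and then from $v_0\sim v$ Lemma \ref{lem:commute} lets one commute the $\sim$ past $R^c_\Box$ to produce such a $w\in C$. Granting this, fix $M\in||R^f_\Box||$ and $f\in\Pi_{i\in\mathsf{Agt}}||R^f_{[i]}||_M$; for each $i$ choose $|v_i|\in f(i)$, noting $f(i)\subseteq M$, and fix an MCS $v$ in the class $|v_1|$. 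Since all the $|v_i|$ lie in the single $R^e_\Box$-class $M$, we have $|v_i|\,R^e_\Box\,|v_1|$, so by totality there are MCSs $w_i\in|v_i|$ with $w_i\,R^c_\Box\,v$ for each $i\in\mathsf{Agt}$. Thus $v\,R^c_\Box\,w_i$ for every $i$, and (D2) for $\mathcal{M}^c$, in the familiar form ``whenever $w\,R_\Box\,v_i$ for each $i$ then some $u$ satisfies $v_i\,R_{[i]}\,u$ for each $i$'', supplies an MCS $u$ with $w_i\,R^c_{[i]}\,u$ for all $i$. Then $|v_i|\,R^e_{[i]}\,|u|$, hence $|u|\,R^f_{[i]}\,|v_i|$, so $|u|$ belongs to the $R^f_{[i]}$-class of $|v_i|$, which is exactly $f(i)$. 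Therefore $|u|\in\bigcap_{i\in\mathsf{Agt}}f(i)$, so this intersection is non-empty and (D2) holds.
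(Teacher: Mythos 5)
Your proposal is correct and follows essentially the same route as the paper: (D1) and (D3*) by monotonicity of existential lifting and transitive closure, and (D2) by using Lemma \ref{lem:commute} to pull the configuration back to witnesses in $\mathcal{M}^c$ related by $R^c_\Box$ to a common MCS, applying (D2) of the canonical model there, and projecting the resulting $u$ back to $|u|$. The only cosmetic difference is that you phrase (D2) via equivalence classes and an explicitly isolated ``totality'' consequence of Lemma \ref{lem:commute}, whereas the paper works directly with the relational form ``$C R^f_\Box D_i$ for all $i$''; the two formulations are interchangeable given (D1).
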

\begin{proof}
    For (D1), note that clearly $R^e_{[i]} \subseteq R^f_\Box$. 
    Since the later is an equivalence relation and in particular is transitive, 
    $R^f_{[i]}$, being the transitive closure of $R^e_{[i]}$,
    is a subset of $R^f_{\Box}$.

    For (D2), suppose we have $C R^f_\Box D_i$ for all $i \in \mathsf{Agt}$. 
    Using \ref{lem:commute}, there is $w \in C$ and $v_i \in D_i$ for each 
    $i \in \mathsf{Agt}$ such that $w R^c_\Box v_i$. 
    Now use (D2) for $\mathcal{M}^c$ and obtain a $u$ such that $w R^c_\Box u$ 
    and $u R^c_{[i]} v_i$ for each $i \in \mathsf{Agt}$. 
    Then $C R^f_\Box |u|$ and $|u| R^f_{[i]} D_i$ for each $i \in \mathsf{Agt}$.

    For (D3*), note that since $R^c_{\mathsf{Agt}} \subseteq R^c_{[i]}$ 
    for any $i \in \mathsf{Agt}$ by (D3*) for $\mathcal{M}^c$, 
    $R^f_{\mathsf{Agt}} \subseteq R^f_{[i]}$ too, because 
    existential lifting and taking transitive closure are clearly 
    monotonic operations.
\end{proof}
\begin{proposition}
    $\mathcal{M}^f$ satisfy (D4*).
\end{proposition}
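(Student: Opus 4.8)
The plan is to reduce condition (D4*) for $\mathcal{M}^f$, namely ${\to^f} \circ {R^f_\Box} \subseteq {R^f_{\mathsf{Agt}}} \circ {\to^f}$, to the corresponding condition for the canonical model $\mathcal{M}^c$. The latter is available for free: we have already observed that $\mathcal{M}^c$ is a DTDS Kripke premodel, so it satisfies (D4*), ${\to^c} \circ {R^c_\Box} \subseteq {R^c_{\mathsf{Agt}}} \circ {\to^c}$ (this is just the Sahlqvist correspondent of (A4), $[\mathsf{Agt}]\mathsf{X}\varphi \to \mathsf{X}\Box\varphi$). So the only real work is to transport this along the filtration, and the key ingredient is Lemma \ref{lem:commute}, which lets us reconcile the witnesses produced by the two existential liftings involved.

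In detail, suppose $C \, ({\to^f} \circ {R^f_\Box}) \, E$, witnessed by a class $D$ with $C \to^f D$ and $D R^f_\Box E$. Unfolding the definitions (recall $R^f_\Box = R^e_\Box$), there are $w \in C$ and $v \in D$ with $w \to^c v$, and there are $v_1 \in D$ and $z \in E$ with $v_1 R^c_\Box z$. Since $v$ and $v_1$ lie in the same $\sim$-class $D$, we have $v \sim v_1 R^c_\Box z$. Applying Lemma \ref{lem:commute} (${\sim} \circ {R^c_\Box} = {R^c_\Box} \circ {\sim}$), we obtain $z'$ with $v R^c_\Box z'$ and $z' \sim z$, so $|z'| = |z| = E$.

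Now $w \to^c v R^c_\Box z'$, i.e. $w \, ({\to^c} \circ {R^c_\Box}) \, z'$, so by (D4*) for $\mathcal{M}^c$ there is $u$ with $w R^c_{\mathsf{Agt}} u$ and $u \to^c z'$. Then $C = |w| \mathbin{R^e_{\mathsf{Agt}}} |u|$, hence $C R^f_{\mathsf{Agt}} |u|$ since $R^f_{\mathsf{Agt}}$, being the transitive closure of $R^e_{\mathsf{Agt}}$, contains $R^e_{\mathsf{Agt}}$; and $|u| \to^f E$ since $u \to^c z'$ with $u \in |u|$ and $z' \in E$. Taking $C' = |u|$ gives $C R^f_{\mathsf{Agt}} C' \to^f E$, which is what was required.

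The only place that requires care — and the step I would flag as the main obstacle — is recognizing that the two witnesses $v$ and $v_1$ in $D$ need not be literally equal, so one cannot directly chain $w \to^c v_1 R^c_\Box z$; Lemma \ref{lem:commute} is precisely the tool that shifts the $R^c_\Box$-edge from $v_1$ onto $v$ while keeping the target inside the class $E$. Everything else is routine bookkeeping about existential liftings and transitive closures.
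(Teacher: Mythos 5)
Your proof is correct and follows essentially the same route as the paper's: lift the witnesses, use Lemma \ref{lem:commute} to reconcile the two representatives of the middle class $D$ into a single chain $w \to^c v R^c_\Box z'$, apply (D4*) of the canonical model, and project back down. The paper's version is just a terser rendering of exactly these steps, with the reconciliation via Lemma \ref{lem:commute} left implicit in the phrase ``there are $u \in C_1$, $v \in C_2$, and $w \in C_3$ such that $u \to^c v R^c_\Box w$.''
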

\begin{proof}
    Suppose $C_1 \to^f C_2 R^f_\Box C_3$. 
    Using Lemma \ref{lem:commute}, 
    there are $u \in C_1$, $v \in C_2$, and $w \in C_3$ such that 
    $u \to^c v R^c_\Box w$. By (D4*) for $\mathcal{M}^c$, 
    there is $v'$ such that $u R^c_{\mathsf{Agt}} v' \to^c w$. 
    Then $C_1 R^f_{\mathsf{Agt}} |v'| \to^f C_3$.
\end{proof}
\begin{proposition}\label{2}
    $\mathcal{M}^f$ satisfies (D5) to (D8).
\end{proposition}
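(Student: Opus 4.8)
The plan is to derive each of (D5)--(D8) for $\mathcal{M}^f$ from the corresponding condition on the canonical model $\mathcal{M}^c$, which satisfies all of them since it is a DTDS Kripke premodel, together with three facts already available: existential lifting and transitive closure are monotone with respect to inclusion; $R^f_\Box$ and each $R^f_{[i]}$ are equivalence relations (shown just above), hence reflexive and transitive; and the commute lemma, Lemma \ref{lem:commute}. Throughout I will repeatedly unfold the definition $R^f_{\mathsf{O}_i} = R^e_{\mathsf{O}_i} \circ R^f_{[i]}$.

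For (D5): since $\mathcal{M}^c$ satisfies (D5) we have $R^c_{\mathsf{O}_i} \subseteq R^c_\Box$, so by monotonicity of lifting $R^e_{\mathsf{O}_i} \subseteq R^e_\Box = R^f_\Box$; also $R^f_{[i]} \subseteq R^f_\Box$ by (D1) for $\mathcal{M}^f$ (Proposition \ref{22}). Composing and using transitivity of the equivalence relation $R^f_\Box$ gives $R^f_{\mathsf{O}_i} = R^e_{\mathsf{O}_i} \circ R^f_{[i]} \subseteq R^f_\Box$. For (D6): given a class $C$, pick $w \in C$; by seriality of $R^c_{\mathsf{O}_i}$ there is $v$ with $w R^c_{\mathsf{O}_i} v$, hence $C\, R^e_{\mathsf{O}_i}\, |v|$, and since $R^f_{[i]}$ is reflexive, $|v|\, R^f_{[i]}\, |v|$, so $C\, R^f_{\mathsf{O}_i}\, |v|$. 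For (D7): $R^f_{\mathsf{O}_i} \circ R^f_{[i]} = R^e_{\mathsf{O}_i} \circ R^f_{[i]} \circ R^f_{[i]} = R^e_{\mathsf{O}_i} \circ R^f_{[i]} = R^f_{\mathsf{O}_i}$, using $R^f_{[i]} \circ R^f_{[i]} = R^f_{[i]}$ because $R^f_{[i]}$ is an equivalence relation; this is exactly why $R^f_{\mathsf{O}_i}$ was defined with the post-composition by $R^f_{[i]}$ rather than as the bare lifting $R^e_{\mathsf{O}_i}$, which need not absorb $R^f_{[i]}$.

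The hard part is (D8), and I would first reduce it: since $R^f_{\mathsf{O}_i} = R^e_{\mathsf{O}_i} \circ R^f_{[i]}$, it suffices to show $R^f_\Box \circ R^e_{\mathsf{O}_i} \subseteq R^e_{\mathsf{O}_i}$, for then $R^f_\Box \circ R^f_{\mathsf{O}_i} = R^f_\Box \circ R^e_{\mathsf{O}_i} \circ R^f_{[i]} \subseteq R^e_{\mathsf{O}_i} \circ R^f_{[i]} = R^f_{\mathsf{O}_i}$. So suppose $C\, R^f_\Box\, D\, R^e_{\mathsf{O}_i}\, E$; unfolding the lifting, fix $v \in D$ and $e \in E$ with $v R^c_{\mathsf{O}_i} e$. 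The key move is to use Lemma \ref{lem:commute} (together with $R^f_\Box = R^e_\Box$ and the symmetry of $R^c_\Box$) to descend $C\, R^f_\Box\, D$ to the level of MCSs \emph{through the chosen} $v$: from $C\, R^e_\Box\, D$ we get some $w_0 \in C$, $v_0 \in D$ with $w_0 R^c_\Box v_0$; since $v \sim v_0$ and $v_0 R^c_\Box w_0$, the equation ${\sim} \circ {R^c_\Box} = {R^c_\Box} \circ {\sim}$ yields $w$ with $v R^c_\Box w$ and $w \sim w_0$, so $w \in C$ and $w R^c_\Box v$. Now $w R^c_\Box v R^c_{\mathsf{O}_i} e$, and (D8) for $\mathcal{M}^c$ gives $w R^c_{\mathsf{O}_i} e$, i.e. $C\, R^e_{\mathsf{O}_i}\, E$, as required.

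I expect (D8) to be the only step needing real care, since it is the one where the interaction between $R^f_\Box$ and $R^f_{\mathsf{O}_i}$ must be traced back through the filtration, which is precisely the situation the commute lemma was proved for; (D5)--(D7) are bookkeeping once one records that lifting and transitive closure preserve inclusions and that $R^f_\Box$ and $R^f_{[i]}$ are equivalence relations.
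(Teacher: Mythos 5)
Your proof is correct and follows essentially the same route as the paper's (which only sketches the key tricks): (D5) by inclusion of both factors in the transitive $R^f_\Box$, (D6) by composing serial relations, (D7) by associativity and idempotence of $R^f_{[i]}$, and (D8) by first establishing $R^f_\Box \circ R^e_{\mathsf{O}_i} \subseteq R^e_{\mathsf{O}_i}$ via Lemma \ref{lem:commute} and then post-composing with $R^f_{[i]}$. Your write-up in fact supplies the details the paper leaves implicit, including the correct reading of the commute-lemma step for (D8).
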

\begin{proof}
    We only comment on the key tricks needed to verify them. 
    For (D5), since $R^f_\Box$ is an equivalence class and 
    both $R^e_{\mathsf{O}_i}$ and $R^f_{[i]}$ are subsets of it, 
    their composition must be as well. 
    For (D6), note that the composition of two serial relations 
    must be serial as well. 
    For (D7), note that composition is associative and 
    $R^f_{[i]} \circ R^f_{[i]} = R^f_{[i]}$.
    For (D8), first show 
    $R^c_{\Box} \circ R^e_{\mathsf{O}_i} \subseteq R^e_{\mathsf{O}_i}$ 
    using Lemma \ref{lem:commute}. 
\end{proof}
\begin{lemma}\label{lem:filtration-right-structure}
    $\mathcal{M}^f$ is a DTDS Kripke premodel.
\end{lemma}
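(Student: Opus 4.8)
The plan is to assemble this lemma from the pieces already established, since essentially all the work has been discharged in the preceding propositions. First I would record the purely definitional points: $S^f$ is non-empty because $S^c$ is non-empty (the trivial consistent theory extends to an MCS by Lindenbaum) and $\sim$, being an equivalence relation on $S^c$, has at least one class; $V^f$ is a function from $\mathsf{Prop}$ to $\wp(S^f)$ directly by Definition \ref{def:filtration}; and each $R^f_{\mathsf{O}_i}$ is a binary relation on $S^f$ because it is defined as the composition $R^e_{\mathsf{O}_i}\circ R^f_{[i]}$. That $R^f_\Box$ is an equivalence relation is Proposition \ref{1}, and that each $R^f_{[i]}$ (hence $R^f_{\mathsf{Agt}}$, being an intersection of equivalence relations) is an equivalence relation is the proposition immediately following it.

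Next I would handle the remaining conditions. Seriality of $\to^f$: by the earlier lemma $\to^c$ is a function on $S^c$, hence total and so serial; thus for any class $C\in S^f$, pick $w\in C$ and $v$ with $w\to^c v$, and then $C\to^f|v|$, so $\to^f$ is serial. The structural conditions are then exactly the content of the preceding propositions: (D1), (D2), (D3*) are Proposition \ref{22}; (D4*) is the proposition after it; and (D5) through (D8) are Proposition \ref{2}. Stringing these together, together with the basic observations of the first paragraph, is all that is needed.

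I do not expect a genuine obstacle here, since the lemma is a bookkeeping statement rather than a substantive one. The only point that warrants care is definitional consistency: one must check that the relations named $R^f_{[i]}$ and $R^f_{\mathsf{O}_i}$ in the statements of Propositions \ref{22} and \ref{2} are literally the ones fixed in Definition \ref{def:filtration} — namely the transitive closure of $R^e_{[i]}$ and the composition $R^e_{\mathsf{O}_i}\circ R^f_{[i]}$ — so that no mismatch slips in when the pieces are combined. Once this is noted, the lemma follows by invoking the six propositions above plus the seriality remark, and I would phrase the proof accordingly, in one short paragraph.
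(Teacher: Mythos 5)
Your proof is correct and takes essentially the same approach as the paper, whose entire proof is ``Combine the previous propositions.'' One minor slip worth fixing: in Definition~\ref{def:filtration}, $R^f_{\mathsf{Agt}}$ is the transitive closure of $R^e_{\mathsf{Agt}}$, not the intersection of the $R^f_{[i]}$ (that identity is only imposed later, in the interpreted systems); its being an equivalence relation is already covered by the proposition treating all STIT operators uniformly, so the conclusion stands.
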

\begin{proof}
    Combine the previous propositions.
\end{proof}

Now that we have verified that $\mathcal{M}^f$ is a DTDS Kripke premodel, 
it remains to show the truth lemma for it, 
using the semantics we defined for DTDS Kripke premodels. 
The inductive step for $\mathsf{U}$ requires a lengthier treatment, 
so we do it separately first. 
\begin{lemma}\label{lem:extra-rule}
    $\mathsf{L}_{\mathrm{DTDS}}$ is closed under the following rule:
    \[
        \frac{\varphi \to \neg \alpha \qquad \varphi \to \mathsf{X}(\varphi \vee(\neg \alpha \wedge \neg \beta))}{\varphi \to \neg \mathsf{U}(\alpha, \beta)}.
    \]
\end{lemma}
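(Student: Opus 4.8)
The plan is to reduce the rule to (\textsf{U}Ind) by choosing the right ``loop invariant'' $\chi$. The naive choice $\chi := \varphi$ does not work: (\textsf{U}Ind) would require $\varphi \to (\lnot\alpha \land \mathsf{X}\varphi)$, but the second premise only gives $\varphi \to \mathsf{X}(\varphi \lor (\lnot\alpha\land\lnot\beta))$, so a single $\mathsf{X}$-step may ``leak'' out of $\varphi$ into a state satisfying $\lnot\alpha\land\lnot\beta$. The point is that such a leak is harmless, because (\textsf{U}Fix) immediately yields $(\lnot\alpha\land\lnot\beta) \to \lnot\mathsf{U}(\alpha,\beta)$; to use this inside a derivation I would strengthen the invariant to carry the offending formula along, taking
\[
    \chi := \varphi \land \mathsf{U}(\alpha,\beta).
\]
Since $\chi \to \mathsf{U}(\alpha,\beta)$ is immediate, once (\textsf{U}Ind) supplies $\chi \to \lnot\mathsf{U}(\alpha,\beta)$ we conclude $\vdash \lnot\chi$, that is, $\vdash \varphi \to \lnot\mathsf{U}(\alpha,\beta)$, which is the conclusion of the rule.

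So the remaining task is to verify the hypothesis $\chi \to (\lnot\alpha \land \mathsf{X}\chi)$ of (\textsf{U}Ind). The conjunct $\chi \to \lnot\alpha$ is exactly the first premise. For $\chi \to \mathsf{X}\chi$ I would establish $\chi \to \mathsf{X}\mathsf{U}(\alpha,\beta)$ and $\chi \to \mathsf{X}\varphi$ separately and then recombine them using that $\mathsf{X}$ commutes with conjunction (pure normality of $\mathsf{X}$). The first is easy: from the first premise $\chi \to \lnot\alpha$, and the left-to-right direction of (\textsf{U}Fix) gives $\mathsf{U}(\alpha,\beta) \land \lnot\alpha \to \beta \land \mathsf{X}\mathsf{U}(\alpha,\beta)$, hence $\chi \to \mathsf{X}\mathsf{U}(\alpha,\beta)$. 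For $\chi \to \mathsf{X}\varphi$ I would first massage the second premise: since $\mathsf{X}$ is functional by (\textsf{X}Func) it distributes over the disjunction, so $\varphi \to \mathsf{X}\varphi \lor \mathsf{X}(\lnot\alpha\land\lnot\beta)$; and from $(\lnot\alpha\land\lnot\beta) \to \lnot\mathsf{U}(\alpha,\beta)$, a consequence of (\textsf{U}Fix), together with (Nec) and the $\mathsf{K}$ axiom for $\mathsf{X}$ and then (\textsf{X}Func) once more, $\mathsf{X}(\lnot\alpha\land\lnot\beta) \to \lnot\mathsf{X}\mathsf{U}(\alpha,\beta)$. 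Combining these, $\varphi \to \mathsf{X}\varphi \lor \lnot\mathsf{X}\mathsf{U}(\alpha,\beta)$; since $\chi \to \varphi$ and $\chi \to \mathsf{X}\mathsf{U}(\alpha,\beta)$ were already shown, propositional logic gives $\chi \to \mathsf{X}\varphi$.

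Assembling the two halves, $\chi \to \mathsf{X}\varphi \land \mathsf{X}\mathsf{U}(\alpha,\beta)$, which by normality of $\mathsf{X}$ gives $\chi \to \mathsf{X}(\varphi \land \mathsf{U}(\alpha,\beta))$, i.e. $\chi \to \mathsf{X}\chi$. Thus $\chi \to (\lnot\alpha \land \mathsf{X}\chi)$, (\textsf{U}Ind) applies, and we are done as explained above. I do not expect a serious obstacle once the invariant has been chosen correctly; the one place where real care is needed is the ``leak'' argument, and it is precisely there that (\textsf{X}Func) is indispensable — functionality of $\mathsf{X}$ is what lets us push $\mathsf{X}$ through the disjunction in the second premise and convert $\mathsf{X}\lnot\mathsf{U}(\alpha,\beta)$ into $\lnot\mathsf{X}\mathsf{U}(\alpha,\beta)$, without which the invariant $\chi$ would not be preserved by an $\mathsf{X}$-step.
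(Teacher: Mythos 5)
Your proof is correct and follows essentially the same route as the paper's: the same invariant $\chi = \varphi \land \mathsf{U}(\alpha,\beta)$ fed into ($\mathsf{U}$Ind), with the left-to-right direction of ($\mathsf{U}$Fix) supplying $\chi \to \lnot\alpha \land \mathsf{X}\mathsf{U}(\alpha,\beta)$. One small correction to your closing remark: ($\mathsf{X}$Func) is not in fact indispensable for handling the ``leak'' --- the paper derives $\chi \to \mathsf{X}\varphi$ using only $\mathsf{K}$-reasoning for $\mathsf{X}$, by rewriting $\varphi \lor (\lnot\alpha\land\lnot\beta)$ as $(\alpha\lor\beta)\to\varphi$ under the $\mathsf{X}$ and combining this with $\mathsf{X}\mathsf{U}(\alpha,\beta)\to\mathsf{X}(\alpha\lor\beta)$.
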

\begin{proof}
    We use $\vdash$ for provability in $\mathsf{L}_{\mathrm{DTDS}}$.
    Suppose that 
    $\vdash \varphi \to \neg\alpha$ and that
    $\vdash \varphi \to \mathsf{X}(\varphi \vee(\neg \alpha \wedge \neg \beta))$. 
    By the left-to-right ($\mathsf{U}$Fix), 
    $\vdash\mathsf{U}(\alpha,\beta)\to(\alpha\vee(\beta \wedge \mathsf{XU}(\alpha,\beta)))$. 
    So
    $\vdash (\varphi \wedge\mathsf{U}(\alpha, \beta)) \to (\neg \alpha \wedge \mathsf{X}\mathsf{U}(\alpha, \beta) \wedge \mathsf{X}(\varphi \vee(\neg \alpha \wedge \neg \beta)))$. 
    Then, by the left-to-right ($\mathsf{U}$Fix) again, 
    $\vdash\mathsf{U}( \alpha,  \beta) \to ( \alpha \vee \beta)$. 
    Hence, $\vdash\mathsf{X}\mathsf{U}( \alpha,  \beta) \to \mathsf{X}( \alpha \vee  \beta)$ 
    by normal modal reasoning with $\mathsf{X}$. 
    Also, it is easy to notice that 
    $\vdash\mathsf{X}(\varphi\vee(\neg\alpha\wedge\neg\beta))\leftrightarrow\mathsf{X}((\alpha\vee\beta)\to\varphi)$.
    Thus, 
    $\vdash(\mathsf{X}\mathsf{U}(\alpha, \beta) \wedge \mathsf{X}(\varphi \vee(\neg \alpha \wedge \neg \beta))) \to \mathsf{X}\varphi$.
    Therefore, $\vdash(\varphi\wedge\mathsf{U}(\alpha, \beta)) \to(\neg\alpha\wedge\mathsf{X}\varphi)$ 
    and thus 
    $\vdash (\varphi \wedge\mathsf{U}(\alpha, \beta)) \to (\neg \alpha \wedge \mathsf{X}(\varphi \wedge\mathsf{U}(\alpha , \beta)))$, using the left-to-right ($\mathsf{U}$Fix) again.
    Now using ($\mathsf{U}$Ind) we obtain 
    $\vdash (\varphi \wedge\mathsf{U}(\psi, \chi)) \to \neg\mathsf{U}(\psi, \chi) $,
    which implies that $\vdash \varphi \to \neg\mathsf{U}(\psi , \chi)$.
\end{proof}
\begin{definition}
    For any $C \in S^f$, let $\Sigma(C) = \bigcap_{w \in C} \Sigma(w)$. 
    Note that since $C$ is an equivalence class of $\sim$ 
    and if $w \sim v$ then at least $\Sigma(w) = \Sigma(v)$, 
    for any $w \in C$, $\Sigma(C) = \Sigma(w)$ and thus $\Sigma(C)$ is also 
    $\bigcup_{w \in C} \Sigma(w)$.
    Then, for any $C \in S^f$, define $\chi_C$ as the formula
    \[
        \bigwedge \Sigma(C) \land \bigwedge\{\Diamond\bigwedge\Sigma(D) \mid C R^c_\Box D\} 
        \land \Box\bigvee\{\bigwedge\Sigma(D) \mid C R^c_\Box D\}.
    \]
\end{definition}

\begin{lemma}\label{lem:until-truth-lemma}
    For any `until' formula $\mathsf{U}(\alpha, \beta) \in \Sigma$
    and for any $C \in S^f$, 
    $\mathsf{U}(\alpha, \beta) \in \Sigma(C)$ iff 
    there is a sequence $C_0 = C \to^f C_1 \to^f \dots \to^f C_m$ ($m \ge 0$) 
    in $\mathcal{M}^f$ 
    such that for all $k \in \{0, 1,\dots,m-1\}$, $\beta \in \Sigma(C_k)$ and $\alpha \in \Sigma(C_m)$.
\end{lemma}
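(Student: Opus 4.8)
The plan is to prove the two directions separately: the right-to-left direction (a witnessing path forces $\mathsf{U}(\alpha,\beta)\in\Sigma(C)$) by a short induction, and the left-to-right direction by a contradiction argument driven by the derived rule of Lemma~\ref{lem:extra-rule}.

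For the right-to-left direction I would induct on the length $m$ of the path $C_0=C\to^f\cdots\to^f C_m$. When $m=0$ we have $\alpha\in\Sigma(C)$, so $\alpha\in w$ for every $w\in C$, and the right-to-left instance of ($\mathsf{U}$Fix) gives $\mathsf{U}(\alpha,\beta)\in w$; since $\mathsf{U}(\alpha,\beta)\in\Sigma$ we conclude $\mathsf{U}(\alpha,\beta)\in\Sigma(C)$. For the step with $m\geq 1$, choose $w\in C$ and $v\in C_1$ with $w\to^c v$ witnessing $C\to^f C_1$ (this uses that $\to^f$ is the existential lifting of $\to^c$); the inductive hypothesis applied to $C_1\to^f\cdots\to^f C_m$ gives $\mathsf{U}(\alpha,\beta)\in\Sigma(C_1)=\Sigma(v)$. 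Since $\to^c$ is a function and $w\to^c v$, the ($\mathsf{X}$Func) axiom forces $\mathsf{X}\mathsf{U}(\alpha,\beta)\in w$ (otherwise $\mathsf{X}\lnot\mathsf{U}(\alpha,\beta)\in w$ and then $\lnot\mathsf{U}(\alpha,\beta)\in v$); together with $\beta\in\Sigma(C)=\Sigma(w)$ and ($\mathsf{U}$Fix) this yields $\mathsf{U}(\alpha,\beta)\in w$, hence $\mathsf{U}(\alpha,\beta)\in\Sigma(C)$.

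For the left-to-right direction I would use the characteristic-formula technique. Let $\mathrm{Bad}$ be the set of $C\in S^f$ with $\mathsf{U}(\alpha,\beta)\in\Sigma(C)$ admitting no witnessing path, and set $\varphi:=\bigvee_{C\in\mathrm{Bad}}\chi_C$. The target is $\vdash\varphi\to\lnot\mathsf{U}(\alpha,\beta)$, obtained from Lemma~\ref{lem:extra-rule}; this is absurd, because any $w$ lying in a class $C\in\mathrm{Bad}$ contains $\chi_C$ (hence $\varphi$) and also $\mathsf{U}(\alpha,\beta)$, so $\mathrm{Bad}=\varnothing$, which is exactly the claim. To apply Lemma~\ref{lem:extra-rule} I must verify its two premises for this $\varphi$: (i) $\vdash\varphi\to\lnot\alpha$ — for $C\in\mathrm{Bad}$ the trivial length-$0$ path is not a witness, so $\alpha\notin\Sigma(C)$, whence $\dot\lnot\alpha\in\Sigma(C)$ by filtration-readiness and $\vdash\chi_C\to\lnot\alpha$; and (ii) $\vdash\varphi\to\mathsf{X}(\varphi\lor(\lnot\alpha\land\lnot\beta))$, for which it suffices to prove $\vdash\chi_C\to\mathsf{X}\varphi$ for every $C\in\mathrm{Bad}$.

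Step (ii) is the main obstacle. I would first record two auxiliary facts about $\chi$: that $\chi_{|w|}\in w$ for every MCS $w$ (a routine canonical-model computation using the finiteness of $\Sigma$ and the standard behavior of $R^c_\Box$), and the ``back'' property that $\chi_C\in u$ implies $u\sim w$ for $w\in C$, hence $u\in C$ (obtained by unpacking the three conjuncts of $\chi_C$ against filtration-readiness and the canonical properties of $R^c_\Box$). Given these, to show $\vdash\chi_C\to\mathsf{X}\varphi$ for $C\in\mathrm{Bad}$ it is enough, by ($\mathsf{X}$Func), to show $\chi_C\land\mathsf{X}\lnot\varphi$ is inconsistent; if it were in an MCS $u$, then $u\in C$ by the back property, and letting $u^+$ be the unique $\to^c$-successor of $u$ we have $\lnot\varphi\in u^+$. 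From $\mathsf{U}(\alpha,\beta)\in u$ and $\alpha\notin\Sigma(C)$, ($\mathsf{U}$Fix) gives $\beta\in\Sigma(C)$ and $\mathsf{X}\mathsf{U}(\alpha,\beta)\in u$, so $\mathsf{U}(\alpha,\beta)\in u^+$ and thus $\mathsf{U}(\alpha,\beta)\in\Sigma(|u^+|)$, while $C\to^f|u^+|$. Since $\beta\in\Sigma(C)$, any witnessing path out of $|u^+|$ prepended by $C$ would be a witnessing path out of $C$, contradicting $C\in\mathrm{Bad}$; hence $|u^+|\in\mathrm{Bad}$, so $\chi_{|u^+|}$ is a disjunct of $\varphi$ and $\varphi\in u^+$ — contradicting $\lnot\varphi\in u^+$. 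The delicate points are the precise statements and proofs of the two $\chi$-facts, and the index bookkeeping in the prepending argument; the rest is routine modal reasoning.
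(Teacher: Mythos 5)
Your proposal is correct and follows essentially the same route as the paper: the right-to-left direction by the same backward propagation through ($\mathsf{U}$Fix) and ($\mathsf{X}$Func), and the left-to-right direction by building a disjunction of the characteristic formulas $\chi_C$, verifying the two premises of the derived rule of Lemma~\ref{lem:extra-rule} via the same two facts about $\chi_C$ (that $\chi_{|w|}\in w$ and that $\chi_C\in u$ forces $u\in C$), and deriving a contradiction. The only variation is that you disjoin over the global set of ``bad'' classes rather than over the paper's set $\mathcal{D}$ of classes reachable from the fixed $C$ through $\beta$-classes, which lets you prove the stronger $\vdash\chi_C\to\mathsf{X}\varphi$ and bypass the $\lnot\alpha\land\lnot\beta$ disjunct, but this is an inessential difference.
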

\begin{proof}
    Note that $\alpha, \beta, \mathsf{X}\mathsf{U}(\alpha, \beta)$ are also in $\Sigma$. 
    For the right-to-left direction, 
    we start by noting that $\mathsf{U}(\alpha, \beta) \in \Sigma(C_m)$ 
    and then induct backwards to show 
    that every $\Sigma(C_k)$ has $\mathsf{U}(\alpha, \beta)$. 
    Since $\alpha \in \Sigma(C_m)$, for any $w \in C_m$, $\alpha \in w$. 
    Since $w$ is an MCS, by ($\mathsf{U}$Fix), $\mathsf{U}(\alpha, \beta) \in w$
    and thus is in $\Sigma(C_m)$. 
    Now suppose for some $k$ with $1 < k \le m$, $\mathsf{U}(\alpha, \beta) \in \Sigma(C_k)$; 
    let us consider $C_{k-1}$. 
    Since $C_{k-1} \to^f C_{k}$, there is $w \in C_{k-1}$ and $v \in C_k$ such that $w \to^c v$. 
    Since $\mathsf{U}(\alpha, \beta) \in \Sigma(C^k)$, $\mathsf{U}(\alpha, \beta) \in v$. 
    By the definition of $\to^c$ and axiom ($\mathsf{X}$Func), $\mathsf{X}\mathsf{U}(\alpha, \beta) \in w$. 
    Since $\beta \in \Sigma(C_k)$, $\beta \in w$. 
    By ($\mathsf{U}$Fix), $\mathsf{U}(\alpha, \beta) \in w$, and thus also in $\Sigma(C_k)$.
    Due to the induction, $\mathsf{U}(\alpha, \beta) \in C_0 = C$.

    For the left-to-right direction, note first that using ($\mathsf{U}$Fix), 
    either $\alpha$ or $\beta$ is in $\Sigma(C)$. 
    If $\alpha \in \Sigma(C)$, then $C$ itself is a required sequence. 
    So suppose $\beta \in \Sigma(C)$. Now define $\mathcal{D}$ as the set of 
    equivalence classes reachable from $C$ in the graph 
    $(\{D \in S^f \mid \beta \in \Sigma(D)\}, \to^f)$. 
    It is easy to see that if either of the following is true, 
    then there is a required sequence:
    \begin{itemize}
        \item[(1)] there is any $D \in \mathcal{D}$ such that 
            $\alpha \in \Sigma(D)$;
        \item[(2)] there is any $D \in \mathcal{D}$ and $D' \in S^f$ such that $D \to^f D'$ 
    and $\alpha \in \Sigma(D')$.
    \end{itemize}
    Now suppose neither of the above is true, and we derive a contradiction. 
    Let $\varphi = \bigvee\{\chi_D \mid D \in \mathcal{D}\}$. 
    Note that for any $w \in S^c$, $|w| \in \mathcal{D}$ iff $\varphi \in w$.
    To see that harder right-to-left direction is true, 
    suppose $|w| \not\in \mathcal{D}$, then for any $D \in \mathcal{D}$ and $v \in D$, 
    $v \not\sim w$. It is then easy to see that $\chi_D \not\in w$ 
    by discussing which condition for $\sim$ fails. 
    Now since (1) is false, for any $D \in \mathcal{D}$, $\alpha \not\in \Sigma(D)$
    and thus $\dot\lnot\alpha \in \Sigma(D)$ and is a conjunct of $\chi_D$. 
    Thus $\varphi \to \lnot\alpha \in \mathsf{L}_{\mathrm{DTDS}}$. 
    Now we show that (*) $\varphi \to \mathsf{X}(\varphi \lor (\lnot\alpha \land \lnot\beta)$
    is in $\mathsf{L}_{\mathrm{DTDS}}$ as well. 
    Pick any $w \in S^c$ and let $v$ be its only $\to^c$ successor. 
    If $|w| \not\in \mathcal{D}$, then $\varphi \not\in w$ and thus (*) is in $w$. 
    Now consider the $|w| \in \mathcal{D}$ case. Either $|v| \in \mathcal{D}$, 
    in which case $\varphi \in v$, or $|v| \not\in\mathcal{D}$, 
    in which case $\alpha \not\in v$ (since (2) is false) and $\beta \not\in v$ 
    (otherwise $|v|$ would be in $\mathcal{D}$).
    Thus $\varphi \lor (\lnot\alpha \land \lnot\beta) \in v$, 
    and $\mathsf{X}(\varphi \lor (\lnot\alpha \land \lnot\beta)) \in w$. 
    Then (*) is in $w$ again.
    Since (*) is in every MCS, (*) must be a theorem in $\mathsf{L}_{\mathrm{DTDS}}$. 
    Using the extra rule given in Lemma \ref{lem:extra-rule}, 
    $\varphi \to \lnot\mathsf{U}(\alpha, \beta) \in \mathsf{L}_{\mathrm{DTDS}}$. 
    But then, for any $w \in C$, which is in $\mathcal{D}$, 
    $w$ would contain $\lnot\mathsf{U}(\alpha, \beta)$, 
    contradicting that $\mathsf{U}(\alpha, \beta) \in \Sigma(C)$.
\end{proof}

\begin{lemma}\label{lem:truth-lemma}
    For any $\varphi \in \Sigma$ and for any $w \in S^c$, 
    $\mathcal{M}^f, |w| \models \varphi$ iff $\varphi \in w$. 
\end{lemma}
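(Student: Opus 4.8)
The plan is to prove the truth lemma by induction on the structure of $\varphi\in\Sigma$; since $\Sigma$ is closed under subformulas, the induction hypothesis is available for every subformula encountered. The atomic case is immediate from the definition of $V^f$ together with the fact that $\sim$-equivalent MCSs agree on $\Sigma$, and the Boolean cases are routine (the negation step using maximal consistency and the $\dot\lnot$-closure of $\Sigma$). For each modal step I will use two ingredients: the existence lemmas already proved (to realize a missing formula $\lnot O\varphi\in w$ or $\lnot\mathsf{X}\varphi\in w$ by a genuine $R^c_O$- or $\to^c$-successor in $S^c$), and a ``soundness'' computation showing that $O\varphi\in\Sigma(C)$ is forced along the filtered relation. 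This second ingredient is the only delicate point, because $R^f_{[i]}$ and $R^f_{\mathsf{Agt}}$ are transitive closures of existential liftings and $R^f_{\mathsf{O}_i}$ is a composition.

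For $\Box$, since $R^f_\Box=R^e_\Box$ a single existential-lift step suffices: if $\Box\varphi\in w$ and $|w|R^f_\Box|v|$, take witnesses $w'\sim w$ and $v'\sim v$ with $w'R^c_\Box v'$; then $\Box\varphi\in w'$ (as $\Box\varphi\in\Sigma$), so $\varphi\in v'$, and the induction hypothesis applied to $v'$ gives $\mathcal{M}^f,|v|\models\varphi$; the converse uses $\lnot\Box\varphi\in w$ and the existence lemma. For the $\mathsf{S5}$ operators $[i]$ and $[\mathsf{Agt}]$ (whose filtered relations $R^f_{[i]},R^f_{\mathsf{Agt}}$ are transitive closures) the key auxiliary observation is a propagation claim: if $[i]\varphi\in\Sigma$, $[i]\varphi\in\Sigma(C)$ and $CR^e_{[i]}D$, then $[i]\varphi\in\Sigma(D)$ and $\varphi\in\Sigma(D)$, proved by choosing witnesses in $S^c$ and applying the $\mathsf{4}$ axiom $[i]\varphi\to[i][i]\varphi$ followed by $\mathsf{T}$. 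Iterating this along an $R^e_{[i]}$-chain shows that $[i]\varphi\in\Sigma(C)$ forces $\varphi\in\Sigma(D)$ for every $D$ with $CR^f_{[i]}D$, which is the soundness direction; the converse is as before, using $R^e_{[i]}\subseteq R^f_{[i]}$.

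I expect the deontic step to be the main obstacle, since $R^f_{\mathsf{O}_i}=R^e_{\mathsf{O}_i}\circ R^f_{[i]}$ mixes a single lift with a transitive closure. For the soundness direction, assume $\mathsf{O}_i\varphi\in w$ and $|w|R^f_{\mathsf{O}_i}D$, say $|w|R^e_{\mathsf{O}_i}C'$ and $C'R^f_{[i]}D$; pick witnesses $w'\sim w$ and $u'\in C'$ with $w'R^c_{\mathsf{O}_i}u'$. The trick is to first apply axiom (A7), $\mathsf{O}_i\varphi\to\mathsf{O}_i[i]\varphi$, so that $\mathsf{O}_i[i]\varphi\in w'$ and hence $[i]\varphi\in u'$; filtration-readiness guarantees $[i]\varphi\in\Sigma$, so $[i]\varphi\in\Sigma(C')$, and then the propagation claim of the previous paragraph pushes $\varphi$ into $\Sigma(D)$, after which the induction hypothesis finishes. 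The converse is easy: $\lnot\mathsf{O}_i\varphi\in w$ yields, by the existence lemma, some $v$ with $wR^c_{\mathsf{O}_i}v$ and $\varphi\notin v$, and since $R^f_{[i]}$ is reflexive, $|w|R^f_{\mathsf{O}_i}|v|$ with $\mathcal{M}^f,|v|\not\models\varphi$. Finally, the $\mathsf{X}$-step uses ($\mathsf{X}$Func) (to turn $\lnot\mathsf{X}\varphi$ into $\mathsf{X}\lnot\varphi$) together with $\mathsf{X}\varphi\in\Sigma$ to transfer from $w$ to a $\sim$-equivalent witness, and the $\mathsf{U}$-step combines Lemma \ref{lem:until-truth-lemma} with the induction hypothesis on $\alpha,\beta$: the semantic clause for $\mathsf{U}$ on $\mathcal{M}^f$ asks for a $\to^f$-path with $\alpha$ true at the end and $\beta$ true along the way, which by the induction hypothesis is exactly the $\Sigma$-membership condition characterized in Lemma \ref{lem:until-truth-lemma}, and that condition is equivalent to $\mathsf{U}(\alpha,\beta)\in\Sigma(|w|)$, i.e.\ to $\mathsf{U}(\alpha,\beta)\in w$.
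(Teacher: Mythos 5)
Your proposal is correct and follows essentially the same route as the paper's proof: standard filtration steps for $\Box$ and $\mathsf{X}$, propagation of $[i]\varphi$ along $R^e_{[i]}$-chains via the $\mathsf{4}$ (and $\mathsf{T}$) axioms to handle the transitive closures, reduction of the $\mathsf{O}_i$ step to the $[i]$ propagation via axiom (A7) and the filtration-readiness clause putting $[i]\varphi$ into $\Sigma$, and deferral to Lemma \ref{lem:until-truth-lemma} for $\mathsf{U}$. The only difference is cosmetic: you propagate the positive box formula forward where the paper propagates the negated one backward, which is just the contrapositive arrangement of the same argument.
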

\begin{proof}
    By induction on $\varphi$. The base case and the inductive step
    for $\Box$ and $\mathsf{X}$ are standard as $R^f_\Box$ and $\to^f$
    are the existential lifting of $R^c_\Box$ and $\to^c$ respectively. 

    For $[\mathsf{Agt}]$ and $[i]$, since their corresponding relation 
    in $\mathcal{M}^f$ requires taking a transitive closure, 
    the proof of the inductive steps for them are also similar.
    Take $[i]$ for example. Take any $[i]\varphi \in \Sigma$. 
    Suppose $\mathcal{M}^f, |w| \models [i]\varphi$
    and $w R^c_{[i]} v$. Then $|w| R^f_{[i]} |v|$, 
    and $\mathcal{M}^f, |v| \models \varphi$. 
    By inductive hypothesis, $\varphi \in v$.
    Thus $\varphi$ is in all $R^c_{[i]}$-successors of $w$ 
    and $[i]\varphi$ must be in $w$. 
    Conversely, suppose $\mathcal{M}^f, |w| \not\models [i]\varphi$. 
    Then by semantics and the definition of $R^f_{[i]}$, 
    there is a chain $C_0 = |w| R^e_{[i]} C_1 R^e_{[i]} \dots R^e_{[i]} C_m$ ($m \ge 1$)
    with $\varphi \not\in \Sigma(C_m)$. 
    This means $\dot\lnot\varphi \in \Sigma(C_m)$, and $\dot\lnot[i]\varphi \in \Sigma(C_{m-1})$. 
    Now we show that if $\dot\lnot[i]\varphi \in \Sigma(C_{k})$, then 
    $\dot\lnot[i]\varphi \in \Sigma(C_{k-1})$ as well. 
    Since $C_{k-1} R^e_{[i]} C_k$, there is $u \in C_{k-1}$ and $v \in C_k$ such that 
    $u R^c_{[i]} v$. Since $\dot\lnot[i]\varphi \in \Sigma(C_k)$, 
    $\dot\lnot[i]\varphi \in v$. 
    Then $\dot\lnot[i][i]\varphi \in u$ due to how $R^c_{[i]}$ is defined 
    and that $u, v$ are MCSs. 
    By the $\mathsf{4}$ axiom for $[i]$, $\dot\lnot[i]\varphi \in u$.
    Clearly $\dot\lnot[i]\varphi \in \Sigma$. 
    Thus $\dot\lnot[i]\varphi \in C_{k-1}$. 
    By repeated use of the above conditional, $\dot\lnot[i]\varphi \in \Sigma(C_0)$, 
    and thus is in $w$. Then $[i]\varphi$ must not be in $w$. 

    Finally, to take care of the inductive step of $\mathsf{O}_i$, 
    take any $\mathsf{O}_i\varphi \in \Sigma$. 
    Suppose $\mathcal{M}^f, |w| \models \mathsf{O}_i\varphi$
    and $w R^c_{\mathsf{O}_i} v$. Then $|w| R^f_{\mathsf{O}_i} |v|$ 
    (note that $R^f_{[i]}$ is reflexive), 
    and thus $\mathcal{M}^f, |v| \models \varphi$. 
    By inductive hypothesis, $\varphi \in v$.
    Thus $\varphi$ is in all $R^c_{\mathsf{O}_i}$-successors of $w$ 
    and $\mathsf{O}_i\varphi$ must be in $w$. 
    Conversely, suppose $\mathcal{M}^f, |w| \not\models \mathsf{O}_i\varphi$. 
    Then there is $C, D \in S^f$ such that $|w| R^e_{\mathsf{O}_i} C$,
    $C R^f_{[i]} D$, and $\mathcal{M}^f, D \not\models \varphi$. 
    Recall that the extra closure condition for $\Sigma$ means $[i]\varphi$ and 
    $\dot\lnot[i]\varphi$ are in $\Sigma$ as well.
    Thus by what we have just shown for $[i]$, 
    $\dot\lnot[i]\varphi \in \Sigma(C)$.
    Now there is $w' \in |w|$ and $v \in C$ such that $w' R^c_{\mathsf{O}_i} v$. 
    Since  $\dot\lnot[i]\varphi \in \Sigma(C)$, it is also in $v$, 
    and thus $\dot\lnot\mathsf{O}_i[i]\varphi \in w'$. 
    By axiom (A7), $\dot\lnot\mathsf{O}_i\varphi \in w'$ 
    and thus is in $w$ too. Then $\mathsf{O}_i\varphi \not\in w$.

    The inductive step for $\mathsf{U}$ has been taken care of by Lemma 
    \ref{lem:until-truth-lemma}.
\end{proof}

\subsection{The completeness proof}
We complete the proof of our main theorem, Theorem \ref{thm:completeness}. 
The soundness of is easy to verify. 
Now take any consistent $\varphi$, and let $\Sigma$ be the 
smallest filtration-ready set containing $\varphi$. 
Note that $\Sigma$ is finite, as it can be obtained by first closing $\varphi$
under subformulas, then adding $\mathsf{X}\mathsf{U}(\alpha, \beta)$ for any 
$\mathsf{U}(\alpha, \beta) \in \Sigma$ and $[i]\varphi$ for any $\mathsf{O}_i\varphi \in \Sigma$, 
and finally closing off under $\dot\lnot$. 
Also, let $w$ be a MCS containing $\varphi$. 

Now construct the filtration $\mathcal{M}^f = (S^f, \dots)$ 
according to Definition \ref{def:filtration}.
By Lemma \ref{lem:truth-lemma}, $\mathcal{M}^f, |w| \models \varphi$.
By Lemma \ref{lem:filtration-right-structure}, $\mathcal{M}^f$ is a DTDS Kripke premodel.
To apply Lemma \ref{lem:selective-unravel}, we still need to make sure that 
    \begin{itemize}
        \item for any $\mathsf{X}\varphi \in \Sigma$,
            $\mathcal{M} \models \mathsf{X}\varphi \leftrightarrow \lnot\mathsf{X}\lnot\varphi$, and
        \item for any $\mathsf{U}(\alpha, \beta) \in \Sigma$, 
            $\mathcal{M} \models \mathsf{U}(\alpha, \beta) \leftrightarrow 
                (\alpha \lor (\beta \land \mathsf{X}\mathsf{U}(\alpha, \beta)))$.
    \end{itemize}
For the first point, take any $\mathsf{X}\varphi \in \Sigma$ and any $C \in S^f$. 
Since $\to^f$ is serial, $\mathcal{M}^f, C \models \mathsf{X}\varphi \to \lnot\mathsf{X}\lnot\varphi$. 
Now suppose $\mathcal{M}^f, C \models \lnot\mathsf{X}\lnot\varphi$. 
This means there is a $D$ such that $C \to^f D$ and $\mathcal{M}^f, D \models \varphi$. 
By the definition of $\to^f$ and Lemma \ref{lem:truth-lemma}, 
there is $u \in C$ such that $\lnot\mathsf{X}\lnot\varphi \in u$. 
By ($\mathsf{X}$Func), $\mathsf{X}\varphi \in u$, and since this formula is in $\Sigma$, 
Lemma \ref{lem:truth-lemma} applies, and $\mathcal{M}^f, C \models \mathsf{X}\varphi$.
The second point is easier as if $\mathsf{U}(\alpha, \beta) \in \Sigma$, 
then $\alpha$, $\beta$, and $\mathsf{X}\mathsf{U}(\alpha, \beta)$ are all in $\Sigma$, 
and thus $\mathsf{U}(\alpha, \beta) \leftrightarrow 
(\alpha \lor (\beta \land \mathsf{X}\mathsf{U}(\alpha, \beta)))$, which is an axiom,
is a Boolean combination of formulas in $\Sigma$. 
Clearly Lemma \ref{lem:truth-lemma} can be extended to Boolean combinations of formulas in $\Sigma$.

So we can apply Lemma \ref{lem:selective-unravel} and 
obtain a super-additive DTDS interpreted system $\mathcal{I} = (H, \dots)$
and surjective $\pi: (H \times \mathbb{N}) \to S^f$ such that 
$(h, t)$ and $\pi((h, t))$ satisfy the same formulas in $\Sigma$. 
By surjectivity, we have an $(h, t)$ such that $\pi((h, t)) = |w|$, 
and thus $\mathcal{I}, (h, t) \models \varphi$. 
Finally, we invoke Lemma \ref{lem:convert-to-additive} and \ref{lem:frame-to-model}
to obtain a DTDS interpreted system $\mathcal{I}' = (H', \dots)$ 
and a surjective map $\pi'$ from $H'$ to $H$ such that 
$\mathcal{I}', (h', t')$ and $\mathcal{I}, \pi'((h', t'))$ satisfies the same
formulas. Then, take any $(h', t') \in \pi'^{-1}((h, t))$, 
and we have that $\mathcal{I}', (h', t') \models \varphi$.
In sum, there is a DTDS interpreted system in which $\varphi$ is true.

\subsection{Decidability and complexity}
Given our filtration method, the decidability of $\mathsf{L}_\mathrm{DTDS}$ naturally follows. For any $\varphi \in \mathcal{L}_\mathrm{DTDS}$, let $\Sigma$ be the smallest filtration-ready set of formulas containing $\varphi$. It is not hard to see that $|\Sigma|$ is in $O(|\varphi|)$ where $|\cdot|$ takes length or size. Then, since to identify two worlds in the filtration defined in Definition \ref{def:filtration}, we not only looks the subset of formulas in $\Sigma$ they satisfy, but also the subsets of formulas in $\Sigma$ their $\Box$-neighbours satisfy, the filtrated model may contain at most $O(2^{2^{|\Sigma|}})$ states. Thus, if $\varphi$ is satisfiable (consistent), then it is satisfied on a DTDS Kripke premodel of size $O(2^{2^{|\varphi|}})$ satisfying the two extra properties in Lemma \ref{lem:selective-unravel}. Conversely, due to our work in Section \ref{sec:general-semantics}, if $\varphi$ is satisfiable on a DTDS Kripke premodel satisfying the two extra properties, then it is satisfiable under the intended semantics based on interpreted systems. Thus, to decide if $\varphi$ is satisfiable, we only need to enumerate all possible DTDS Kripke premodels of size at most $O(2^{2^{|\varphi|}})$, check if it satisfies the two extra properties (which obviously does not increase order of space required for computation), and if so, check if it also satisfies $\varphi$. Thus, the satisfiability problem for $\mathsf{L}_\mathrm{DDTS}$ is in 2-EXPSPACE. 

It must be admitted that this is a very rough upper bound; the main merit of this proof is that it does not need automata theory. However, we do believe that proofs in \cite{boudouConcurrentGameStructures2018,halpernComplexityReasoningKnowledge1988} can be adapted to obtain a 2-EXPTIME completeness result.

\section{Conclusion}\label{sec:conclusion}
We have shown how to axiomatize the discrete-time temporal deontic STIT logic based on interpreted systems. Now we mention a few directions that may be fruitful for future investigation. 

First, this logic is different from the logic determined by full discrete-time branching time structures, and an axiomatization of that logic will necessarily involve techniques in \cite{reynoldsAxiomatizationFullComputation2001}. While this is a technically worthwhile project, it is also important to ask conceptually which logic is more suitable for deontic reasoning, or whether they are equally appropriate. 

Second, we believe there should be a way to systematically and automatically axiomatize  logics based on augmenting interpreted systems with synchronous modalities. While it is known that sometimes we may enter the realm of non-axiomatizability \cite{halpern2004complete}, an analogue of Sahlqvist completeness theorem should be within reach. 

Finally, we have not provided any concrete story of how certain actions are allowed while  others are forbidden, nor is there a concrete story of how the norms persist or fail to persist. In other words, $\mathsf{L}_\mathrm{DTDS}$ does not include any reduction of the deontics to other features of the models, or any non-trivial logical principles governing the interaction between the deontic and the temporal. In both \cite{murakami2004utilitarian} and \cite{van2019neutral} the deontic requirements are generated by a utilitarian model while \cite{van2019neutral} also considers temporal operators, and \cite{broersenWhatFailToday2008} uses product frames to generate a logic with non-trivial principles of obligation propagation in time, though only in a single agent setting. In multi-agent and temporal settings, we believe that we must have concrete tokens symbolizing rights that can be transferred between agents or even groups of agents and are kept by agents by default. Petri nets may be useful for such modeling \cite{silenoPetriNetBasedNotation2018}.

\bibliography{sn-bibliography}

\end{document}